\theoremstyle{plain}
\newtheorem{theorem}[subsection]{Theorem}
\newtheorem{lemma}[subsection]{Lemma}
\newtheorem{proposition}[subsection]{Proposition}
\theoremstyle{definition}
\newtheorem{example}[subsection]{Example}
\newtheorem{remark}[subsection]{Remark}
\newcommand{\comp}{}
\newcommand{\defn}{\textbf}
\newcommand{\ot}{\leftarrow}
\renewcommand{\implies}{$\Rightarrow$}
\newcommand{\coeq}{\ensuremath{\mathrm{Coeq}}}
\newcommand{\C}{\ensuremath{\mathcal{C}}}
\newcommand{\Grpd}{\ensuremath{\mathsf{Grpd}}}
\newcommand{\Span}{\ensuremath{\mathsf{Span}}}
\newcommand{\Cat}{\ensuremath{\mathsf{Cat}}}
\newcommand{\MG}{\ensuremath{\mathsf{MG}}}
\newcommand{\PreGrpd}{\ensuremath{\mathsf{PreGrpd}}}
\newcommand{\RG}{\ensuremath{\mathsf{RG}}}
\newcommand{\V}{\ensuremath{\mathcal{V}}}
\newcommand{\W}{\ensuremath{\mathcal{W}}}
\newcommand{\N}{\ensuremath{\mathbb{N}}}
\newcommand{\noproof}{\hfill \qed}
\newcommand{\matrixone}{\xymatrix@!@1@=2em{{\cdot} && {\cdot}\\
& {\cdot} \ar[lu]^-{x_{1}} \ar[ld]_-{x_{2}} \ar[ru]_-{y_{1}} \ar[rd]^-{y_{2}} && {\cdot} \ar[lu]^-{z_{1}} \ar[ld]_-{z_{2}}\\
{\cdot} && {\cdot}\\
& {\cdot} \ar[lu]^-{x_{3}} \ar[ru]_-{y_{3}} && {\cdot} \ar[lu]^-{z_{3}}}}
\newcommand{\matrixtwo}{\xymatrix@!@1@=2em{{\cdot} && {\cdot}\\
& {\cdot}  && {\cdot} \ar[lllu]|-{p(x_1,y_1,z_{1})}\ar[llld]|-{p(x_2,y_2,z_{2})} \\
{\cdot} && {\cdot}\\
& {\cdot}  && {\cdot} \ar[lllu]|-{p(x_1,y_1,z_{1})}}}
\newcommand{\matrixthree}{\xymatrix@!@1@=2em{{\cdot} && {\cdot}\\
& {\cdot}  && {\cdot}  \\
{\cdot} && {\cdot}\\
& {\cdot} \ar[luuu]^-{p(x_1,x_2,x_{3})}\ar[ruuu]|-{p(y_1,y_2,y_{3})} && {\cdot} \ar[luuu]_-{p(z_1,z_2,z_{3})}}}
\newcommand{\matrixfour}{\xymatrix@!@1@=2em{{\cdot} && {\cdot}\\
& {\cdot}  && {\cdot}  \\
{\cdot} && {\cdot}\\
& {\cdot}  && {\cdot} \ar@/_2ex/[llluuu]|-{} \ar@/^2ex/[llluuu]|-{}}}
\def\pullback{% with thanks to Valerian Even
 \ar@{-}[]+R+<6pt,-1pt>;[]+RD+<6pt,-6pt>%
 \ar@{-}[]+D+<1pt,-6pt>;[]+RD+<6pt,-6pt>}
\title{\footnotetext{The first author was supported by IPLeiria/ESTG-CDRSP and Funda\c c\~ao para a Ci\^encia e a Tecnologia (under grants number SFRH/BPD/43216/2008, PTDC/EME-CRO/120585/2010 and PTDC/MAT/120222/2010). The second author is a Research
Associate of the Fonds de la Recherche Scientifique--FNRS. His research was supported by Centro de Matem\'atica da Universidade de Coimbra (CMUC) and by Funda\c c\~ao para a Ci\^encia e a Tecnologia (under grant number SFRH/BPD/38797/2007). He wishes to thank the Instituto Poli\-t\'ecnico for its kind hospitality during his stay in Leiria.}}
\author{\uppercase{\bf Categories vs.\ groupoids\\ \bf via generalised Mal'tsev properties}\\[.5cm]
\it by Nelson MARTINS-FERREIRA and Tim VAN DER LINDEN
}
\begin{document}

\date{}

\maketitle

\vspace{1cm}

\begin{minipage}{118mm}{\small
\textbf{R\'esum\'e.} On \'etudie la diff\'erence entre les cat\'egories internes et les groupo\"ides internes en termes de propri\'et\'es de Malcev g\'en\'eralis\'ees---la propri\'et\'e de Malcev faible d'un c\^ot\'e, et l'$n$-permutabilit\'e de l'autre. Dans la premi\`ere partie de l'article on donne des conditions sur les structures cat\'egoriques internes qui d\'etectent si la cat\'egorie ambiante est naturellement de Malcev, de Malcev ou faiblement de Malcev. On d\'emontre que celles-ci ne d\'ependent pas de l'existence de produits binaires. Dans la seconde partie on se concentre sur les vari\'et\'es d'alg\`ebres universelles.

\smallskip
\textbf{Abstract.} We study the difference between internal categories and internal groupoids in terms of generalised Mal'tsev properties---the weak Mal'tsev property on the one hand, and $n$-permutability on the other. In the first part of the article we give conditions on internal categorical structures which detect whether the surrounding category is naturally Mal'tsev, Mal'tsev or weakly Mal'tsev. We show that these do not depend on the existence of binary products. In the second part we focus on varieties of algebras. 

\medskip
\textbf{Keywords.} Mal'tsev condition, $n$-permutable variety, internal category

\smallskip
\textbf{Mathematics Subject Classification (2010).} 17D10, 18B99, 18D35
}\end{minipage}
\thispagestyle{empty}
\pagestyle{empty}

\section*{Introduction}
In this article we study the difference between internal categories and internal groupoids through the generalised Mal'tsev properties their surrounding category may have---the weak Mal'tsev property on the one hand, and $n$-permutability on the other. Conversely, or equivalently, we try to better understand these Mal'tsev conditions by providing new characterisations and new examples for them, singling out distinctive properties of a given type of category via properties of its internal categorical structures: internal categories, (pre)groupoids, relations.

The first part of the text gives a conceptual unification of three levels of Mal'tsev properties: \emph{naturally Mal'tsev} categories~\cite{Johnstone:Maltsev} using groupoids, categories, pregroupoids, etc.\ (Theorem~\ref{th1.nat}), \emph{Mal'tsev} categories~\cite{Carboni-Kelly-Pedicchio, Carboni-Pedicchio-Pirovano} using equivalence relations, preorders, difunctional relations (Theorem~\ref{th4.mal}), and \emph{weakly Mal'tsev} categories~\cite{NMF1} via strong equivalence relations, strong preorders, difunctional strong relations (Theorem~\ref{th6.weak}). Each of the resulting collections of equivalent conditions is completely parallel to the others, and such that a weaker collection of conditions is characterised by a smaller class of internal structures. 

Some of these characterisations are well established, whereas some others are less familiar; what is new in all cases is the context in which we prove them: we never use binary products, but restrict ourselves to categories in which kernel pairs and split pullbacks exist.

The notion of weakly Mal'tsev category is probably not as well known as the others. It was introduced in~\cite{NMF1} as a setting where any internal reflexive graph admits at most one structure of internal category. It turned out that this new notion is weaker than the concept of Mal'tsev category. But, unlike in Mal'tsev categories, in this setting not every internal category is automatically an internal groupoid. This gave rise to the following problem: to characterise those weakly Mal'tsev categories in which internal categories and internal groupoids coincide.

In Section~\ref{Section-Cat-vs-Grpd} we observe that, in a weakly Mal'tsev category with kernel pairs and equalisers, the following hold: (1) the forgetful functor from internal categories to multiplicative graphs is an isomorphism; (2) the forgetful functor from internal groupoids to internal categories is an isomorphism if and only if every internal preorder is an equivalence relation (Theorem~\ref{th8.main}). 

We study some varietal implications of this result in Section~\ref{Section-Varieties}. In finitary quasivarieties of universal algebra, the latter condition---that reflexivity and transitivity together imply symmetry---is known to be equivalent to the variety being \emph{$n$-permutable}, for some~$n$ (Proposition~\ref{Proposition-Permutable}). On the way we recall Proposition~\ref{Proposition-n-Permutable}, a result due to Hagemann~\cite{Hagemann-Mitschke}---see also the monograph \cite{CEL}, and the article~\cite{JRVdL1} where it is proved in the context of regular categories. We furthermore explain how to construct a weakly Mal'tsev quasivariety starting from a Goursat (= $3$-permutable) quasivariety (Proposition~\ref{Proposition-Subvariety}), and use this procedure to show that categories which are both weakly Mal'tsev and Goursat still need not be Mal'tsev (Example~\ref{All-but-Maltsev}).

Of course, via part (2) of Theorem~\ref{th8.main}, our Proposition~\ref{Proposition-Permutable} implies that, in an $n$-permutable weakly Mal'tsev variety, every internal category is an internal groupoid---but surprisingly, here in fact the weak Mal'tsev property is not needed: $n$-permutability suffices, as was recently proved by Rodelo~\cite{Rodelo:Internal-categories} and further explored in the paper~
\cite{MFRVdL1}. This indicates that there may still be hidden connections between these two (a priori independent) weakenings of the Mal'tsev axiom.

\section{Preliminaries}\label{Section-Preliminaries}
We recall the definitions and basic properties of some internal categorical structures which we shall use throughout this article.

\subsection{Split pullbacks}
Let $\C$ be any category. A diagram in $\C$ of the form
\begin{equation}\label{splitsquare}
\vcenter{\xymatrix@!0@=4em{
E \ar@<.5ex>[r]^-{p_2} \ar@<-.5ex>[d]_-{p_1} & C \ar@<.5ex>[l]^-{e_2}
\ar@<-.5ex>[d]_-{g}
 \\
A
 \ar@<.5ex>[r]^-{f} \ar@<-.5ex>[u]_-{e_1}
& B
 \ar@<.5ex>[l]^-{r} \ar@<-.5ex>[u]_-{s}
 }}
\end{equation}
such that
\[
g p_2 =f p_1,\quad p_1 e_2 =rg,\quad e_1 r = e_2 s,\quad p_2 e_1 =sf
\]
and
\[
p_1 e_1=1_{A},\quad fr =1_{B},\quad gs =1_{B}, \quad p_2 e_2 =1_{C}
\]
is called a \defn{double split epimorphism}. When we call a double split epimorphism a \defn{pullback} we refer to the commutative square of split epimorphisms $fp_{1}=gp_{2}$. Any pullback of a split epimorphism along a split epimorphism gives rise to a double split epimorphism; we say that $\C$ \defn{has split pullbacks} when the pullback of a split epimorphism along a split epimorphism always exists.

In a category with split pullbacks $\C$, any diagram such as
\begin{equation}\label{couniv}
\vcenter{\xymatrix@!0@=4em{A \ar@<.5ex>[r]^-{f} \ar[rd]_-{\alpha} & B
\ar@<.5ex>[l]^-{r}
\ar@<-.5ex>[r]_-{s}
\ar[d]^-{\beta} & C \ar@<-.5ex>[l]_-{g} \ar[ld]^-{\gamma}\\
& D}}
\end{equation}
where $fr=1_{B}=gs$ and $\alpha r=\beta=\gamma s$ induces a diagram
\begin{equation}\label{kite}
\vcenter{\xymatrix@!0@=3em{ & C \ar@<.5ex>[ld]^-{e_2} \ar@<-.5ex>[rd]_-{g}
\ar@/^/[rrrd]^-{\gamma} \\
A\times_{B}C \ar@<.5ex>[ru]^-{\pi_2}
\ar@<-.5ex>[rd]_-{\pi_1} && B \ar@<.5ex>[ld]^-{r} \ar@<-.5ex>[lu]_-{s}
 \ar[rr]|-{\beta} && D\\
& A \ar@<.5ex>[ru]^-{f} \ar@<-.5ex>[lu]_-{e_1} \ar@/_/[urrr]_-{\alpha}}}\end{equation}
in which the square is a double split epimorphism. This kind of diagram will appear in the statements of Theorem~\ref{th1.nat}, \ref{th4.mal} and~\ref{th6.weak} as part of a universal property: under certain conditions we expect it to induce a (unique) morphism $\varphi\colon{A\times_{B}C\to D}$ such that $\varphi e_{1}=\alpha$ and $\varphi e_{2}=\gamma$.% When such a unique morphism $\varphi$ \emph{always} exists, this of course just means that the commutative square of sections $e_{1}r=e_{2}s$ is a pushout. In this case we may say that the double split epimorphism induced by $f$ and $g$ \defn{is a pushout}, but naturally we should be careful to avoid confusion about which square is meant.

\subsection{Internal groupoids}
A \defn{reflexive graph} in $\C$ is a diagram of the form
\begin{equation}\label{reflgraph}
\xymatrix@!0@=4em{C_1 \ar@<1ex>[r]^-{d} \ar@<-1ex>[r]_-{c} & C_0 \ar[l]|-{e}}
\end{equation}
such that $de=1_{C_{0}}=ce$.

A \defn{multiplicative graph} in $\C$ is a diagram of the form
\begin{equation}\label{multgraph}
\xymatrix@!0@=4em{
C_2
\ar@<3ex>[r]^-{\pi_2}
\ar@<-3ex>[r]_-{\pi_1}
\ar[r]|{m}
& C_1
\ar@<-1.5ex>[l]|-{e_2}
\ar@<1.5ex>[l]|-{e_1}
\ar@<1.5ex>[r]^-{d}
\ar@<-1.5ex>[r]_-{c}
& C_0 \ar[l]|-{e}
}
\end{equation}
where
\[
m e_1=1_{C_1}=m e_2,\quad dm =d\pi_2\quad\text{and}\quad cm = c\pi_1
\]
and the double split epimorphism
\begin{equation*}
\xymatrix@!0@=4em{
C_2
\ar@<.5ex>[r]^-{\pi_2} \ar@<-.5ex>[d]_-{\pi_1}
& C_1
\ar@<.5ex>[l]^-{e_2} \ar@<-.5ex>[d]_-{c}
 \\
C_1
 \ar@<.5ex>[r]^-{d} \ar@<-.5ex>[u]_-{e_1}
& C_0
 \ar@<.5ex>[l]^-{e} \ar@<-.5ex>[u]_-{e}
 }
\end{equation*}
is a pullback. Observe that a multiplicative graph is in particular a reflexive
graph ($de=1_{C_{0}}=ce$) and that the morphisms $e_1$ and $e_2$ are universally induced by the pullback:
\[
e_1 = \langle 1_{C_{1}},ed\rangle \quad\text{and}\quad
e_2 =\langle ec,1_{C_{1}}\rangle.
\]
When the category $\C$ admits split pullbacks we shall refer to a multiplicative graph simply as
\begin{equation*}
\xymatrix@!0@=4em{
C_2
\ar[r]^-{m}
& C_1
\ar@<1ex>[r]^-{d}
\ar@<-1ex>[r]_-{c}
& C_0. \ar[l]|-{e}}
\end{equation*}

An \defn{internal category} is a multiplicative graph which satisfies the associativity condition $m(1\times m)=m(m\times 1)$.

An \defn{internal groupoid} is an internal category where both squares $dm=d\pi_2$ and $cm=c\pi_1$ are pullbacks (see for instance~\cite[Proposition~A.3.7]{Borceux-Bourn}). Equivalently, there should be a morphism $t\colon C_1\to C_1$ with $ct=d$, $dt=c$ and $m\langle 1_{C_{1}},t\rangle =ec$, $m\langle t, 1_{C_{1}}\rangle =ed$.

In the following sections we shall consider the obvious forgetful functors
\begin{equation*}
\xymatrix{\Grpd(\C) \ar[r]^-{U_3} & \Cat(\C) \ar[r]^-{U_2} & \MG(\C) \ar[r]^-{U_1} & \RG(\C)}
\end{equation*}
from groupoids in $\C$ to internal categories, to multiplicative graphs, to reflexive graphs. We write $U_{12}$ and $U_{123}$ for the induced composites $U_{1}\comp U_{2}$ and $U_{1}\comp U_{2}\comp U_{3}$, respectively.

\subsection{Internal pregroupoids}
A \defn{pregroupoid}~\cite{Kock-Pregroupoids, Johnstone:Herds, Janelidze-Pedicchio} in $\C$ is a span
\begin{equation*}
(d,c)=\vcenter{\xymatrix@!0@=3em{& D \ar[ld]_-{d} \ar[rd]^-{c} \\ D_0 & & D_0'}}
\end{equation*}
together with a structure of the form
\begin{equation}\label{pullbacks}
\vcenter{\xymatrix@=3em{
D\times_{D_{0}}D\times_{D'_{0}}D 
\ar@<.5ex>[r]^-{p_2}
\ar@<-.5ex>[d]_-{p_1}
\ar@{}[rd]|-{(1)}
& D\times_{D'_{0}}D \ar[r]^-{c_2} \ar[d]|-{c_1}
\ar@<0.5ex>[l]^-{i_2}
\ar@{}[rd]|-{(2)}
& D \ar[d]^-{c} \\
D\times_{D_{0}}D \ar[r]|-{d_2} \ar[d]_-{d_1}
\ar@<-0.5ex>[u]_-{i_1}
\ar@{}[rd]|-{(3)}
& D \ar[r]_-{c} \ar[d]^-{d} & D'_0 \\
D \ar[r]_-{d} & D_0
}}
\end{equation}
where (1), (2) and (3) are pullback squares, the morphisms $i_1$, $i_2$ are determined by
\[
p_1 i_1=1_{D\times_{D_{0}}D}, \quad p_2 i_1=\langle d_2 , d_2\rangle 
\]
and
\[
p_2 i_2=1_{D\times_{D'_{0}}D}, \quad p_1 i_2=\langle c_1 , c_1\rangle 
\]
and there is a further morphism $p\colon D\times_{D_{0}}D\times_{D'_{0}}D \to D$ which satisfies the conditions
\begin{gather}
p i_1 =d_1\quad\text{and}\quad p i_2 =c_2,\label{Mal'tsev-conditions}\\
dp =d c_2 p_2\quad\text{and}\quad cp = c d_1 p_1.\label{Domain-and-Codomain}
\end{gather}
When $\C$ admits split pullbacks and kernel pairs, we shall refer to a pregroupoid structure simply as a structure
\begin{equation}\label{pregroupoid}
\vcenter{\xymatrix@C=5em@R=2em{&& D'_0 \\
D\times_{D_{0}}D\times_{D'_{0}}D \ar[r]^-{p} & D \ar[ru]^-{c} \ar[rd]_-{d} \\
& & D_0.}}
\end{equation}
In order to have a visual picture, we may think of the object $D$ as having elements of the form
\[
\xymatrix@1{c(x) & \ar[l]_-{x} d(x)}\quad\text{or}\quad\xymatrix@1{\cdot & \ar[l]_-{x} \cdot}
\]
and hence the ``elements'' of $D\times_{D_{0}}D$, $D\times_{D'_{0}}D$ and $D\times_{D_{0}}D\times_{D'_{0}}D$ are, respectively, of the form
\[
\xymatrix@1{\cdot & \cdot \ar[l]_-{x} \ar[r]^-{y} & \cdot},\quad
\xymatrix@1{\cdot \ar[r]^-{x} & \cdot & \cdot \ar[l]_-{y}}
\]
and
\[
\xymatrix@1{\cdot & \ar[l]_-{x} \cdot \ar[r]^-{y} & \cdot & \ar[l]_-{z} \cdot.}
\]
Observe that the morphism $p$ is a kind of Mal'tsev operation in the
sense that $p(x,y,y)=x$ and $p(x,x,y)=y$ (the conditions~\eqref{Mal'tsev-conditions}). Furthermore, $dp(x,y,z)=dz$ and $cp(x,y,z)=cx$ by~\eqref{Domain-and-Codomain}.

In the following sections we shall also consider the forgetful functor
\[
V\colon \PreGrpd(\C) \to \Span(\C)
\]
from the category of pregroupoids to the category of spans in~$\C$.

The definition of pregroupoid also contains the associativity axiom, asking that $p(p(x,y,z),u,v)=p(x,y,p(z,u,v))$ whenever both sides of the equation make sense. We shall not assume this, but rather deduce the property in the naturally Mal'tsev and (weakly) Mal'tsev contexts.

\subsection{Relations}
The notions of reflexive relation, preorder (or reflexive and transitive relation), equivalence relation, and difunctional relation, may all be obtained, respectively, from the notions of reflexive graph, internal category (or multiplicative graph), internal groupoid, and pregroupoid, simply by imposing the extra condition that the pair of morphisms $(d,c)$ is jointly monomorphic.
We will also consider \defn{strong} relations: here the pair of morphisms $(d,c)$ is jointly strongly monomorphic.

\section{Mal'tsev conditions}\label{Section-Maltsev}
In this section we study some established and some less known characterisations of \emph{Mal'tsev} and \emph{naturally Mal'tsev} categories in terms of internal categorical structures. We extend these characterisations, which are usually considered in a context with finite limits, to a more general setting: categories with kernel pairs and split pullbacks. In particular we shall never assume that binary products exist. This allows for a treatment of \emph{weakly Mal'tsev} categories in a manner completely parallel to the treatment of the two stronger notions.

\subsection{Naturally Mal'tsev categories}
We first consider the notion of naturally Mal'tsev category~\cite{Johnstone:Maltsev} in a context where binary products are not assumed to exist. This may seem strange, as the original definition takes place in a category with binary products (and no other limits). We can do this because the main characterisation of naturally Mal'tsev categories---as those categories for which the forgetful functor from internal groupoids to reflexive graphs is an isomorphism---is generally stated in a finitely complete context. This context may be even further reduced: we shall show that the existence of kernel pairs and split pullbacks is sufficient.

\begin{theorem}\label{th1.nat}
Let $\C$ be a category with kernel pairs and split pullbacks. The following are equivalent:
\renewcommand{\labelenumi}{(\roman{enumi})}
\begin{enumerate}
\item the functor $U_{123}\colon \Grpd(\C) \to \RG(\C)$ is
an isomorphism;
\item the functor $U_{12}\colon \Cat(\C) \to \RG(\C)$ has a section;
\item the functor $U_{1}\colon \MG(\C) \to \RG(\C)$ has a section;
\item the functor $V\colon \PreGrpd(\C) \to \Span(\C)$ has a section;
\item for every diagram such as~\eqref{couniv} in $\C$, given any span
\begin{equation*}
\xymatrix@!0@=3em{& D \ar[ld]_-{d} \ar[rd]^-{c} \\ D_0 & & D_0'}
\end{equation*}
such that $d\alpha=d\beta f$ and $c\gamma=c\beta g$, there is a unique $\varphi \colon{A \times_B C \to D}$ such that
\begin{equation}\label{conditions}
\varphi e_1 = \alpha,\quad\varphi e_2=\gamma \quad\text{and}\quad d\varphi =d\gamma\pi_2,\quad c\varphi =c\alpha\pi_1.
\end{equation}
\end{enumerate}
If the above equivalent conditions hold, then the functors $U_{12}$, $U_{1}$ and $V$ are also isomorphisms. Furthermore, any pregroupoid is associative.
\end{theorem}
\begin{proof}
$\text{(i)}\Rightarrow\text{(ii)}$ follows by composing the inverse of $U_{123}$ from $\text{(i)}$ with the functor $U_{3}\colon{\Grpd(\C)\to \Cat(\C})$. For $\text{(ii)}\Rightarrow\text{(iii)}$ we compose with $U_{2}\colon{\Cat(\C)\to \MG(\C)}$. Let us prove $\text{(iii)}\Rightarrow\text{(iv)}$.

Suppose that the functor $U_1$ has a section. Then any reflexive graph admits a canonical morphism $m$
\begin{equation*}
\xymatrix@!0@=4em{ C_2 \ar[r]^-{m} & C_1 \ar@<1ex>[r]^-{d} \ar@<-1ex>[r]_-{c} &
C_0 \ar[l]|-{e} }
\end{equation*}
such that $m e_1=1_{C_{1}}=me_2$, $dm=d\pi_2$ and $cm=c\pi_1$ as in the definition of a multiplicative graph. Furthermore, this morphism is natural, in the sense that, for any
morphism $f=(f_1,f_0)$ of reflexive graphs, the diagram
\begin{equation}\label{m is natural}
\vcenter{\xymatrix@!0@=4em{C_2 \ar[r]^-{m} \ar[d]_-{f_2} & C_1 \ar[d]_-{f_1} \ar@<1ex>[r]^-{d} \ar@<-1ex>[r]_-{c} & C_0 \ar[l]|-{e} \ar[d]^-{f_{0}} \\
C'_2 \ar[r]_-{m'} & C'_1 \ar@<1ex>[r]^-{d'} \ar@<-1ex>[r]_-{c'} & C'_0 \ar[l]|-{e'}}}
\end{equation}
with $f_2=f_1\times_{f_0} f_1$ commutes.

To prove that the functor $V$ has a section, we have to construct a pregroupoid structure for any given span
\begin{equation*}
\xymatrix@!0@=3em{& D \ar[ld]_{d} \ar[rd]^{c} \\ D_0 & & D'_0.}
\end{equation*}
Let us consider the reflexive graph
\begin{equation}\label{rg1}
\xymatrix@=3em{D\times_{D_{0}}D\times_{D'_{0}}D \ar@<1ex>[r]^-{c_{2}p_{2}} \ar@<-1ex>[r]_-{d_{1}p_{1}} & D \ar[l]|-{\Delta}}
\end{equation}
(see diagram~\eqref{pullbacks}) where an ``element'' of $D\times_{D_{0}}D\times_{D'_{0}}D$
\begin{equation*}
\xymatrix@1{\cdot& \cdot \ar[l]_{x} \ar[r]^{y} & \cdot & \ar[l]_{z} \cdot}
\end{equation*}
is viewed as an arrow $y$ having domain $x$ and codomain $z$. It is clearly reflexive, with $\Delta(x)=(x,x,x)$ being the identity on $x$. It is a multiplicative graph because the functor~$U_{1}$ has a section. The desired pregroupoid structure $p$ for $(D,d,c)$ is obtained by the following procedure: given
\begin{equation*}
\xymatrix@1{\cdot& \cdot \ar[l]_{x} \ar[r]^{y} & \cdot & \ar[l]_{z}\cdot}
\end{equation*}
in $D\times_{D_{0}}D\times_{D'_{0}}D$, consider the pair of composable arrows
\begin{equation*}
(\xymatrix@1{\cdot\ar[r]^{x} & \cdot & \cdot \ar[l]_{x} \ar[r]^{y} &\cdot}, \xymatrix@1{\cdot\ar[r]^{y} & \cdot & \cdot \ar[l]_{z} \ar[r]^{z}
&\cdot} )
\end{equation*}
in the reflexive graph~\eqref{rg1}. Since this reflexive graph is multiplicative, multiply in order to obtain
\begin{equation*}
\xymatrix@1{\cdot\ar[r]^{x} & \cdot & \cdot \ar[l]_{p(x,y,z)} \ar[r]^{z}
&\cdot}
\end{equation*}
and project to the middle component.

The equalities $p(x,y,y)=x$ and $p(x,x,y)=y$ simply follow from the multiplicative identities $me_1=1_{C_{1}}=me_2$ of the multiplicative graph. Likewise, $dp(x,y,z)=dz$ and $cp(x,y,z)=cx$. This construction is functorial because the multiplication is natural. 

Next we prove that, if $V$ has a section, then the category $\C$ satisfies Condition~(v). Consider a diagram such as~\eqref{kite} above and a suitable span $(d,c)$. We have to construct a morphism $\varphi\colon {A\times_{B}C \to D}$ which satisfies the needed conditions, and prove that this $\varphi$ is unique. To do so, we use the natural pregroupoid structure $p\colon D\times_{D_{0}}D\times_{D'_{0}}D \to D$. Since $d\alpha=d\beta f$, $c\gamma=c\beta g$ and $\alpha r=\beta=\gamma s$, there is an induced morphism
\[
\langle \alpha \pi_{1},\beta f\pi_{1},\gamma\pi_{2}\rangle \colon A\times_B C \to D\times_{D_{0}}D\times_{D'_{0}}D.
\]
It assigns to any $(a,c)$ with $f(a)=b=g(c)$ in $A\times_B C$ a triple
\begin{equation*}
\xymatrix@1{\cdot & \cdot \ar[l]_-{\alpha(a)} \ar[r]^-{\beta(b)} &
\cdot & \cdot \ar[l]_-{\gamma(c)}}
\end{equation*}
in $D\times_{D_{0}}D\times_{D'_{0}}D$. The desired morphism $\varphi \colon A\times_{B} C \to D$
is then obtained by taking its composition in the pregroupoid, i.e., $\varphi (a,c) =p(\alpha(a),\beta(b),\gamma(c))$ or
\[
\varphi =p\langle \alpha \pi_{1},\beta f\pi_{1},\gamma\pi_{2}\rangle.
\]
This proves existence; the equalities $\varphi(a,b,s(b))=\alpha(a)$ and
$\varphi(r(b),b,c)=\gamma(c)$ follow from the properties of $p$, as do $d\varphi =d\gamma\pi_2$ and $c\varphi =c\alpha\pi_1.$

Now we show that the equalities~\eqref{conditions} determine $\varphi$ uniquely. Let us consider the span
\[
\xymatrix@!0@=3em{& A\times_{B}C \ar[ld]_{\pi_{2}} \ar[rd]^{\pi_{1}} \\ C & & A}
\]
with its induced pregroupoid structure
\[
p\colon (A\times_{B}C)\times_{C}(A\times_{B}C)\times_{A}(A\times_{B}C)\to A\times_{B}C;
\]
if the morphisms in this pregroupoid are viewed as arrows
\[
\xymatrix@1{a & c \ar[l]_-{(a,c)}}
\]
then the operation $p$ takes a composable triple
\[
\xymatrix@1{a & c \ar[l]_-{(a,c)} \ar[r]^-{(a',c)} & a' & c' \ar[l]_-{(a',c')}}
\]
and sends it to
\[
\xymatrix@1{a & c' \ar[l]_-{(a,c')}}
\]
in $A\times_{B}C$. Note that this pregroupoid structure is unique, because the given span is a relation; in fact, its existence expresses the relation's difunctionality. Further note that it is a strong relation (cf.\ Theorem~\ref{th6.weak} below).

The morphism $\varphi$ now gives rise to a morphism of pregroupoids, determined by the morphism of spans
\[
\xymatrix@!0@=4em{C \ar[d]_-{d\gamma} & A\times_{B}C \ar[d]^-{\varphi} \ar[r]^-{\pi_{1}} \ar[l]_-{\pi_{2}} & A \ar[d]^-{c\alpha} \\
D_{0} & D \ar[r]_-{c} \ar[l]^-{d} & D_{0}'.}
\]
We write
\[
\varphi'\colon{(A\times_{B}C)\times_{C}(A\times_{B}C)\times_{A}(A\times_{B}C)\to D\times_{D_{0}}D\times_{D'_{0}}D}
\]
for the induced morphism to see that
\begin{align*}
\varphi(a,c) &= \varphi p(\xymatrix@1{a & sf(a) \ar[l] \ar[r] & rg(c) & c \ar[l]})\\
&= p\varphi'(\xymatrix@1{a & sf(a) \ar[l] \ar[r] & rg(c) & c \ar[l]})\\
&=p(\xymatrix@1@=3em{\cdot & \cdot \ar[l]_-{\varphi e_{1}(a)} \ar[r]^-{\varphi e_{1}r(b)} &
\cdot & \cdot \ar[l]_-{\varphi e_{2}(c)}})\\
&=p(\xymatrix@1{\cdot & \cdot \ar[l]_-{\alpha(a)} \ar[r]^-{\beta(b)} &
\cdot & \cdot \ar[l]_-{\gamma(c)}})\\
&=p(\alpha(a),\beta(b),\gamma(c))
\end{align*}
and $\varphi$ is uniquely determined.

Next we prove that (v) implies Condition (i) in our theorem. Given a reflexive graph~\eqref{reflgraph}, a unique multiplication $m$ satisfying~\eqref{conditions}, so
\[
m e_1 = 1_{C_{1}},\qquad m e_2=1_{C_{1}} \qquad\text{and}\qquad dm =d\pi_2,\qquad cm =c\pi_1,
\]
is induced by the diagram
\[
\vcenter{\xymatrix@!0@=4em{C_{1} \ar@<.5ex>[r]^-{d} \ar@{=}[rd] & C_{0}
\ar@<.5ex>[l]^-{e}
\ar@<-.5ex>[r]_-{e}
\ar[d]^-{e} & C_{1} \ar@<-.5ex>[l]_-{c} \ar@{=}[ld]\\
& C_{1}}}
\]
together with the span $(d,c)$. 

The naturality of $m$ (see diagram~\eqref{m is natural}) follows from the uniqueness of the morphism induced by the diagram
\[
\vcenter{\xymatrix@!0@=4em{C_{1} \ar@<.5ex>[r]^-{d} \ar@{->}[rd]_{f_1} & C_{0}
\ar@<.5ex>[l]^-{e}
\ar@<-.5ex>[r]_-{e}
\ar[d]|-{e'f_0} & C_{1} \ar@<-.5ex>[l]_-{c} \ar@{->}[ld]^{f_1}\\
& C'_{1}}}
\]
and the span $(d',c')$: indeed, both $f_{1}m$ and $m'f_{2}$ qualify. 
This already gives us Condition~(iii) in its strong form where $U_{1}$ is an isomorphism.

The associativity condition (needed for (ii)) follows from the uniqueness of the morphism induced by the diagram
\[
\vcenter{\xymatrix@!0@=4em{C_{2} \ar@<.5ex>[r]^-{\pi_{2}} \ar[rd]_-{m} & C_{1}
\ar@<.5ex>[l]^-{e_{2}}
\ar@<-.5ex>[r]_-{e_{1}}
\ar@{=}[d] & C_{2}: \ar@<-.5ex>[l]_-{\pi_{1}} \ar[ld]^-{m}\\
& C_{1}}}
\]
indeed, both $m(1_{C_{1}}\times m)$ and $m(m\times 1_{C_{1}})$ satisfy the required conditions~\eqref{conditions}, so they coincide.

The existence of inverses (needed for (i)) follows from the diagram
\[
\vcenter{\xymatrix@!0@=4em{C_{2} \ar@<.5ex>[r]^-{m} \ar[rd]_-{\pi_{2}} & C_{1}
\ar@<.5ex>[l]^-{e_{2}}
\ar@<-.5ex>[r]_-{e_{1}}
\ar@{=}[d] & C_{2} \ar@<-.5ex>[l]_-{m} \ar[ld]^-{\pi_{1}}\\
& C_{1}}}
\]
as explained in~\cite{NMF1}.

To show that the functor $V$ is an isomorphism, given a span $(d,c)$, we use the diagram
\[
\vcenter{\xymatrix@!0@=4em{D_{d,c} \ar@<.5ex>[r]^-{c_{2}p_2} \ar@{=}[rd] & D
\ar@<.5ex>[l]^-{\Delta}
\ar@<-.5ex>[r]_-{\Delta}
\ar[d]|-{\Delta} & D_{d,c} \ar@<-.5ex>[l]_-{d_1p_1} \ar@{=}[ld]\\
& D_{d,c}}}
\]
where $D_{d,c}=D\times_{D_{0}}D\times_{D_{0}'}D$ and $\Delta=\langle1_{D},1_{D},1_{D}\rangle$ to prove uniqueness of its pregroupoid structure.

Finally, given a pregroupoid~\eqref{pregroupoid}, its associativity follows by using (v) on the diagram
\[
\vcenter{\xymatrix@!0@=4em{D_{d,c} \ar@<.5ex>[r]^-{c_{2}p_2} \ar[rd]_-{p} & D
\ar@<.5ex>[l]^-{\Delta}
\ar@<-.5ex>[r]_-{\Delta}
\ar@{=}[d] & D_{d,c} \ar@<-.5ex>[l]_-{d_1p_1} \ar[ld]^-{p}\\
& D,}}
\]
because the morphisms
\[
{D_{d,c}\times_{D} D_{d,c}\to D}
\]
defined by sending $(x,y,z,u,v)$ to $p(p(x,y,z),u,v)$ or to $p(x,y,p(z,u,v))$ both meet the requirements, so they must agree by the uniqueness in (v).
\end{proof}

Observe that, in the case of finite limits, any one of the equivalent conditions of Theorem~\ref{th1.nat} is a characterisation for the notion of naturally Mal'tsev category introduced in~\cite{Johnstone:Maltsev}. Indeed, the Mal'tsev operation on an object $X$ is determined by the diagram
\[
\vcenter{\xymatrix@!0@=5em{X\times X \ar@<.5ex>[r]^-{\pi_{2}} \ar[rd]_-{\pi_{1}} & X
\ar@<.5ex>[l]^-{\langle 1_{X},1_{X}\rangle}
\ar@<-.5ex>[r]_-{\langle 1_{X},1_{X}\rangle}
\ar@{=}[d] & X\times X \ar@<-.5ex>[l]_-{\pi_{1}} \ar[ld]^-{\pi_{2}}\\
& X}}
\]
together with the span ${1\ot X\to 1}$.

In the presence of coequalisers, when every span in $\C$ is naturally endowed with a unique pregroupoid structure, there is an interchange law for composable strings valid in any pregroupoid in~$\C$.

\begin{proposition}\label{proposition-interchange}
Let $\C$ be a category with kernel pairs, split pullbacks and coequalisers satisfying the conditions {\rm (i)--(v)}. Consider a pregroupoid~\eqref{pregroupoid} in~$\C$. Then for any configuration of the shape
\begin{equation}\label{config}
\vcenter{\matrixone}
\end{equation}
in this pregroupoid, the equality
\begin{multline}\label{p_autonomous}
p(p(x_1,x_2,x_3),p(y_1,y_2,y_3),p(z_1,z_2,z_3))\\
=p(p(x_1,y_1,z_1),p(x_2,y_2,z_2),p(x_3,y_3,z_3))
\end{multline}
holds.
\end{proposition}
\begin{proof}
It suffices to consider the pregroupoid in $\C$ in which the configurations~\eqref{config} are the composable triples, and then the equality will follow by naturality of the pregroupoid structures. This pregroupoid 
\[
\vcenter{\xymatrix@C=5em@R=2em{&& \overline{D'_{0}} \\
\overline{D}\times_{\overline{D_{0}}}\overline{D}\times_{\overline{D'_{0}}}\overline{D} \ar[r]^-{\overline{p}} & \overline{D} \ar[rd]_-{\overline{d}=\langle dp, d\pi\rangle} \ar[ru]^-{\overline{c}=\langle cp,c\pi\rangle} \\
& & \overline{D_{0}}}}
\]
is determined by the span $(\langle dp, d\pi\rangle,\langle cp,c\pi\rangle)$ where $\overline{D}=D\times_{D_{0}}D\times_{D'_{0}}D$,
\begin{gather*}
\overline{D_{0}}=D_{0}\times_{Q}D_{0}\qquad\qquad \xymatrix@1{D_{0} \ar[rr]^-{\coeq(dp, d\pi)} && Q}\\
\overline{D'_{0}}=D'_{0}\times_{Q'}D'_{0}\qquad\qquad \xymatrix@1{D'_{0} \ar[rr]^-{\coeq(cp, c\pi)} && Q'}
\end{gather*}
and the middle projection $\pi=d_{2}p_{1}=c_{1}p_{2}\colon{D\times_{D_{0}}D\times_{D'_{0}}D\to D}$ (diagram~\eqref{pullbacks}) maps a composable triple $(x_{1},x_{2},x_{3})$ to $x_{2}$. It is easily checked that the morphism~$\overline{p}$ which sends~\eqref{config} to its horizontal composite---the composable triple
\[
(p(x_1,y_1,z_1),p(x_2,y_2,z_2),p(x_3,y_3,z_3))
\]
in $D$, see Figure~\ref{bigdiagram}---determines a pregroupoid structure (hence, the unique one) on this span. 

\begin{figure}
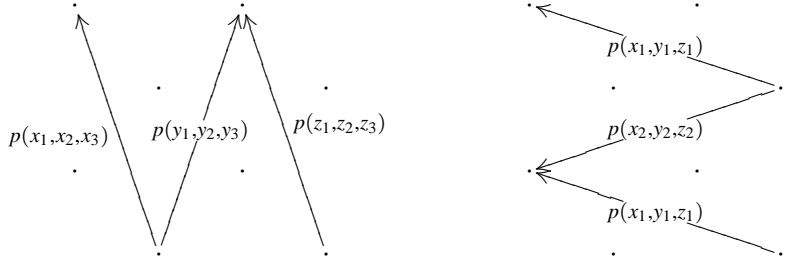

%\resizebox{\textwidth}{!}
{$\matrixthree \qquad\qquad \matrixtwo$}
\caption{Vertical and horizontal composition}\label{bigdiagram}
\end{figure}

Furthermore, by naturality of pregroupoid structures, the morphism of spans
\[
\xymatrix@!0@=4.5em{\overline{D_{0}} \ar[d] & \overline{D} \ar[d]^-{p} \ar[r]^-{\langle cp,c\pi\rangle} \ar[l]_-{\langle dp, d\pi\rangle} & \overline{D'_{0}} \ar[d] \\
D_{0} & D \ar[r]_-{c} \ar[l]^-{d} & D_{0}'}
\]
induces a morphism $p'\colon{\overline{D}\times_{\overline{D_{0}}}\overline{D}\times_{\overline{D'_{0}}}\overline{D}\to \overline{D}}$ such that $pp'=p\overline{p}$, which gives us the required equality~\eqref{p_autonomous}. Indeed, the induced morphism $p'$ takes \eqref{config} and sends it to its vertical composite---the composable triple
\[
(p(x_1,x_2,x_3),p(y_1,y_2,y_3),p(z_1,z_2,z_3))
\]
in $D$, see again Figure~\ref{bigdiagram}.
\end{proof}

Note that the equality~\eqref{p_autonomous} is a partial version of the Mal'tsev operation $p$ being \emph{autonomous}, see~\cite{Johnstone:Maltsev}.

\subsection{Mal'tsev categories}
Restricting Theorem~\ref{th1.nat} to the case where the morphisms $d$ and $c$ are jointly monomorphic we obtain the well known characterisation~\cite{Carboni-Pedicchio-Pirovano} for Mal'tsev categories.

\begin{theorem}\label{th4.mal}
Let $\C$ be a category with kernel pairs and split pullbacks. The
following are equivalent:
\begin{enumerate}
\item [(i')] every reflexive relation is an equivalence relation;
\item [(ii')] every reflexive relation is a preorder;
\item [(iii')] every reflexive relation is transitive;
\item [(iv')] every relation is difunctional;
\item [(v')] for every diagram such as~\eqref{couniv} in $\C$, given any relation
\begin{equation*}
\xymatrix@!0@=3em{& D \ar[ld]_-{d} \ar[rd]^-{c} \\ D_0 & & D_0'}
\end{equation*}
such that $d\alpha=d\beta f$ and $c\gamma=c\beta g$, there is a unique $\varphi \colon{A \times_B C \to D}$ such that
\[
\varphi e_1 = \alpha,\qquad\varphi e_2=\gamma\qquad\text{and}\qquad d\varphi =d\gamma\pi_2,\qquad c\varphi =c\alpha\pi_1.
\]
\end{enumerate}
\end{theorem}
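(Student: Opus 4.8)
The plan is to mirror the proof of Theorem~\ref{th1.nat}, observing that joint monicity of the pair $(d,c)$ is preserved by every construction used there, so that each ``equivalence of categories'' condition collapses to the corresponding ``property'' condition. Indeed, for a relation the multiplicative, groupoid or pregroupoid structure, when it exists, is unique---being a property rather than extra structure---so that Conditions~(i$'$)--(iv$'$) are precisely the relational incarnations of Conditions~(i)--(iv): a reflexive relation which is multiplicative is a preorder, a reflexive relation with inverses is an equivalence relation, and a relation carrying a Mal'tsev operation $p$ is difunctional. The implications (i$'$)$\To$(ii$'$)$\To$(iii$'$) are then immediate.

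First I would treat (iii$'$)$\To$(iv$'$), repeating the construction of the pregroupoid structure from the proof of Theorem~\ref{th1.nat}. Given a relation $(D,d,c)$, I form the reflexive graph~\eqref{rg1} on $D$ whose object of arrows is $D\times_{D_0}D\times_{D'_0}D$. The crucial observation---the point on which the whole ``restriction to relations'' hinges---is that this reflexive graph is again a relation: an ``element'' $(x,y,z)$ has domain $x$ and codomain $z$, and since $d(y)=d(x)$ and $c(y)=c(z)$, joint monicity of $(d,c)$ forces $y$ to be determined by the pair $(x,z)$, so that the two legs of~\eqref{rg1} are jointly monomorphic. Hence~(iii$'$) applies and makes~\eqref{rg1} transitive; multiplying a suitable composable pair and projecting to the middle component yields the Mal'tsev operation $p$ exactly as before, which is to say that $(D,d,c)$ is difunctional.

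Next, for (iv$'$)$\To$(v$'$) I would run the existence part of the argument for Condition~(v) in Theorem~\ref{th1.nat} verbatim: the target span is a relation by hypothesis, so~(iv$'$) supplies its (unique) pregroupoid structure $p$, and $\varphi=p\langle\alpha\pi_1,\beta f\pi_1,\gamma\pi_2\rangle$ does the job. Here uniqueness becomes trivial: the equations $d\varphi=d\gamma\pi_2$ and $c\varphi=c\alpha\pi_1$ already determine $\varphi$ because $(d,c)$ is jointly monic, so the auxiliary pregroupoid on $A\times_B C$ used in Theorem~\ref{th1.nat} is no longer needed. Finally, for (v$'$)$\To$(i$'$) I would feed a reflexive relation~\eqref{reflgraph}, regarded as a span which is itself a relation, into~(v$'$): the diagrams appearing in the last part of the proof of Theorem~\ref{th1.nat} induce the multiplication $m$ (transitivity) and, as in~\cite{NMF1}, the inverse $t$ (symmetry), while associativity and the remaining groupoid identities hold automatically by joint monicity. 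This exhibits the reflexive relation as an equivalence relation and closes the cycle (i$'$)$\To$(ii$'$)$\To$(iii$'$)$\To$(iv$'$)$\To$(v$'$)$\To$(i$'$).

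The one genuinely new point, and the step I expect to require the most care, is the verification in (iii$'$)$\To$(iv$'$) that~\eqref{rg1} is a relation and not merely a reflexive graph, since this is exactly what legitimises applying~(iii$'$) to it; everything else reduces to checking that the spans occurring in Theorem~\ref{th1.nat}---notably $A\times_B C$ with its projections, which is jointly monic because it is a pullback---stay within the class of relations.
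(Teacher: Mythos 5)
Your proposal is correct and takes essentially the same route as the paper, whose entire proof of this theorem reads ``By restricting to relations one easily adapts the proof of Theorem~\ref{th1.nat} to the present situation.'' You have simply made that adaptation explicit, rightly isolating the one point that genuinely needs checking---that the reflexive graph~\eqref{rg1} built on a relation is again a relation, by joint monicity of $(d,c)$---and noting the simplification that the same joint monicity makes uniqueness in (v$'$) immediate.
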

\begin{proof}
By restricting to relations one easily adapts the proof of Theorem~\ref{th1.nat} to the present situation.
\end{proof}

An important result on Mal'tsev categories is the following one, usually stated for finite limits~\cite{Carboni-Pedicchio-Pirovano}; it follows, for instance, from Theorem~\ref{th8.main}.

\begin{theorem}\label{th5.mal}
Let $\C$ be a category with kernel pairs, split pullbacks and equalisers, satisfying the equivalent conditions of Theorem~\ref{th4.mal}. Then the forgetful functor
\[
U_{3}\colon \Grpd(\C) \to \Cat(\C)
\]
is an isomorphism.\noproof
\end{theorem}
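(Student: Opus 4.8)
The plan is to show that $U_3$ is fully faithful and essentially surjective, and to reduce the latter to the classical fact that in a Mal'tsev category \emph{every internal category is an internal groupoid}. First I would observe that $\Grpd(\C)$ sits inside $\Cat(\C)$ as a full, replete subcategory: the inversion $t$ is uniquely determined by the remaining structure, being the unique solution of $m\langle 1_{C_1},t\rangle=ec$ together with $dt=c$ and $ct=d$. Hence any internal functor between groupoids automatically commutes with $t$, so $U_3$ is full (and it is clearly faithful), and any internal category isomorphic in $\Cat(\C)$ to a groupoid inherits an inversion. Consequently $U_3$ is an equivalence as soon as it is essentially surjective, that is, as soon as every internal category admits such a~$t$; equivalently, as recalled in the preliminaries, as soon as the two division squares $dm=d\pi_2$ and $cm=c\pi_1$ are pullbacks.

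The heart of the argument is therefore the production of inverses out of the mere existence of composition, and this is where the Mal'tsev hypothesis enters. Given an internal category $C_2\rightrightarrows C_1\rightrightarrows C_0$, I would attach to its composition the reflexive relation of right divisibility on $C_1$: informally, $u$ is related to $v$ exactly when $u=m(x,v)$ for some composable $x$. Reflexivity follows from $me_2=1_{C_1}$, and transitivity from associativity of $m$ (compose the two witnesses). Thus this relation is a preorder, and by the equivalence of conditions in Theorem~\ref{th4.mal}, via (iii')$\To$(i'), it is symmetric. Read on generalised elements, symmetry says that whenever $u=m(x,v)$ one also has $v=m(x',u)$; specialising $v$ to an identity arrow (using $me_1=1_{C_1}$) yields a left inverse for each arrow, and the dual left-divisibility relation yields a right inverse. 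Together with associativity these collapse to a single two-sided inversion $t$, which furnishes the internal groupoid structure and hence the essential surjectivity of $U_3$.

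The delicate point, and the one I expect to be the main obstacle, is to carry this out globally rather than merely on generalised elements, and to do so using only the limits assumed here: kernel pairs, split pullbacks and equalisers. Presenting right divisibility as an actual subobject of $C_1\times C_1$ would normally require an image factorisation, and binary products, that need not be available, so the symmetry furnished by Theorem~\ref{th4.mal} must instead be converted into the morphism $t$ through the universal properties attached to the split pullbacks~\eqref{splitsquare} and the uniqueness of maps induced as in~\eqref{kite}. This is precisely the bookkeeping that the paper isolates once and for all in Theorem~\ref{th8.main}; granting that result, Theorem~\ref{th4.mal}(i')---every preorder is an equivalence relation---immediately supplies the hypothesis needed to conclude that $U_3$ is an equivalence.
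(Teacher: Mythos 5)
Your proposal is correct and follows essentially the same route as the paper, which proves this theorem simply by invoking Theorem~\ref{th8.main}: the conditions of Theorem~\ref{th4.mal} restrict to those of Theorem~\ref{th6.weak}, so by Theorem~\ref{th7.weak} the category is weakly Mal'tsev, and condition (i') in particular makes every internal preorder an equivalence relation. Your divisibility sketch is precisely the mechanism inside the proof of Theorem~\ref{th8.main}, where the relation is realised without image factorisations or binary products as the span $(m,\pi_1)$ on $C_2$, shown to be jointly monomorphic in Lemma~\ref{Lemma-Relation}.
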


\subsection{Weakly Mal'tsev categories}
A category is said to be \defn{weakly Mal'tsev} when it has split pullbacks and every induced pair of morphisms into the pullback $(e_1,e_2)$ as in Diagram~\eqref{splitsquare} above is jointly epimorphic~\cite{NMF1}.

Further restricting the conditions of Theorem~\ref{th1.nat} to the case where the morphisms $d$ and $c$ are jointly strongly monomorphic---and calling such a span a \defn{strong relation}~\cite{ZurabNelson}---we obtain a characterisation of weakly Mal'tsev categories.

\begin{theorem}\label{th6.weak}
Let $\C$ be a category with kernel pairs and split pullbacks. The
following are equivalent:
\begin{enumerate}
\item [(i'')] every reflexive strong relation is an equivalence relation;
\item [(ii'')] every reflexive strong relation is a preorder;
\item [(iii'')] every reflexive strong relation is transitive;
\item [(iv'')] every strong relation is difunctional;
\item [(v'')] for every diagram such as~\eqref{couniv} in $\C$, given any strong relation
\begin{equation*}
\xymatrix@!0@=3em{& D \ar[ld]_-{d} \ar[rd]^-{c} \\ D_0 & & D_0'}
\end{equation*}
such that $d\alpha=d\beta f$ and $c\gamma=c\beta g$, there is a unique $\varphi \colon{A \times_B C \to D}$ such that
\[
\varphi e_1 = \alpha,\qquad\varphi e_2=\gamma\qquad\text{and}\qquad d\varphi =d\gamma\pi_2,\qquad c\varphi =c\alpha\pi_1.
\]
\end{enumerate}
\end{theorem}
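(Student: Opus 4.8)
The plan is to mimic the proof of Theorem~\ref{th1.nat}, systematically replacing every span by a strong relation, exactly as Theorem~\ref{th4.mal} is obtained by replacing spans with ordinary relations. The first thing I would exploit is that a strong relation is in particular a relation, so that the pair $(d,c)$ is jointly monomorphic. Hence whenever a morphism $\varphi\colon A\times_B C\to D$ with $d\varphi=d\gamma\pi_2$ and $c\varphi=c\alpha\pi_1$ exists it is automatically unique, and the uniqueness half of Condition~(v'') becomes immediate. This is a genuine simplification: in the general situation of Theorem~\ref{th1.nat} uniqueness required the delicate detour through the induced pregroupoid structure on the span $(A\times_B C,\pi_2,\pi_1)$, and that entire argument is now superfluous. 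For the same reason the implications (i'')$\To$(ii'')$\To$(iii'') are trivial, since a reflexive strong relation that happens to be an equivalence relation is in particular a preorder and hence transitive, and such structure, when it exists, is unique.

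The substance is therefore the single cycle (iii'')$\To$(iv'')$\To$(v'')$\To$(i''), which together with the trivial implications closes up all five conditions. Each arrow reproduces the corresponding step of Theorem~\ref{th1.nat}. For (iii'')$\To$(iv'') I would take a strong relation $(D,d,c)$, form the reflexive graph~\eqref{rg1} on $D\times_{D_0}D\times_{D'_0}D\rightrightarrows D$, invoke transitivity to obtain its multiplication, and read off the pregroupoid operation $p$ as in the earlier proof; this exhibits $(D,d,c)$ as difunctional. For (iv'')$\To$(v'') I would set $\varphi=p\langle\alpha\pi_1,\beta f\pi_1,\gamma\pi_2\rangle$ and check the four required equalities directly from the identities satisfied by $p$, existence of $\varphi$ thus coming from difunctionality alone. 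For (v'')$\To$(i'') I would feed the reflexive strong relation, viewed as a span, together with the two auxiliary diagrams of Theorem~\ref{th1.nat}, into Condition~(v''); this produces a multiplication and an inversion, and since the relation is jointly monomorphic the associativity and inverse laws hold automatically, making it an equivalence relation.

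The one point that genuinely needs the word \emph{strong} rather than merely \emph{monic} is the verification that the auxiliary graph~\eqref{rg1} is again a strong relation whenever $(D,d,c)$ is; only then may Condition~(iii'') legitimately be applied to it in the step (iii'')$\To$(iv''). I expect this to be the main obstacle, and it is precisely where the defining lifting property of a jointly strongly monomorphic pair is used. Concretely, the domain–codomain pair of~\eqref{rg1} sends $(x,y,z)$ to $(x,z)$, so a lifting problem against an epimorphism $e$ reduces, once the prescribed outer components are in place, to lifting the middle component $y$ along $e$ with prescribed $d$- and $c$-values; this final lift is supplied exactly by the joint strong monomorphy of $(d,c)$, and the resulting triple automatically respects the two pullback constraints defining $D\times_{D_0}D\times_{D'_0}D$. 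It is worth noting that the joint epimorphy of $(e_1,e_2)$—the weak Mal'tsev property itself—plays no role in this mutual equivalence: uniqueness is free from joint monomorphy, and existence is carried by the pregroupoid operation. Once the preservation statement for~\eqref{rg1} is established, every remaining identity is copied verbatim from the proof of Theorem~\ref{th1.nat}.
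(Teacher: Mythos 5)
Your proposal is correct and follows essentially the same route as the paper, whose proof of this theorem is simply ``by restricting to strong relations one easily adapts the proof of Theorem~\ref{th1.nat}''; your constructions (the auxiliary reflexive graph~\eqref{rg1}, the operation $p$, the formula $\varphi=p\langle\alpha\pi_1,\beta f\pi_1,\gamma\pi_2\rangle$, and the two diagrams yielding multiplication and inverses) are exactly those of that proof. You correctly isolate the one point the paper leaves implicit---that~\eqref{rg1} is again a \emph{strong} relation, via the lifting property of $(d,c)$ applied to the middle component---and your observations that joint monomorphy makes uniqueness and associativity automatic are sound.
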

\begin{proof}
By restricting to strong relations one easily adapts the proof of Theorem~\ref{th1.nat} to the present situation.
\end{proof}

\begin{theorem}\label{th7.weak}
Let $\C$ be a category with kernel pairs, split pullbacks and equalisers. The following are equivalent:
\begin{enumerate}
\item $\C$ is a weakly Mal'tsev category;
\item $\C$ satisfies the equivalent conditions of Theorem~\ref{th6.weak}.
\end{enumerate}
\end{theorem}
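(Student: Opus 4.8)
The plan is to use Theorem~\ref{th6.weak} to replace the whole family of conditions by the single condition (v''), and then to analyse the filler $\varphi$ that it demands. The central observation is that, for a \emph{strong relation} $(d,c)$, the morphism $\varphi\colon A\times_{B}C\to D$ is completely governed by the two equations $d\varphi=d\gamma\pi_{2}$ and $c\varphi=c\alpha\pi_{1}$: since $(d,c)$ is jointly monomorphic these equations already determine $\varphi$ \emph{uniquely}, and one checks that the remaining requirements $\varphi e_{1}=\alpha$ and $\varphi e_{2}=\gamma$ then follow automatically (evaluate $d$ and $c$ on $\varphi e_{i}$ and use $\pi_{1}e_{1}=1$, $\pi_{2}e_{2}=1$, $\pi_{2}e_{1}=sf$, $\pi_{1}e_{2}=rg$ together with $\alpha r=\beta=\gamma s$ and the hypotheses $d\alpha=d\beta f$, $c\gamma=c\beta g$). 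Thus uniqueness of $\varphi$ is free, and the genuine content of (v'') is the \emph{existence} of $\varphi$, equivalently the factorisation of the pair $(d\gamma\pi_{2},c\alpha\pi_{1})$ through the jointly strongly monomorphic pair $(d,c)$.

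For the implication (1)\To(2) I would argue as follows. Assume $\C$ weakly Mal'tsev, so that for the split pullback underlying~\eqref{couniv} the induced pair $(e_{1},e_{2})$ is jointly epimorphic. The same computation shows that the base pair $(d\gamma\pi_{2},c\alpha\pi_{1})$ restricts along $e_{1}$ to $(d\alpha,c\alpha)$ and along $e_{2}$ to $(d\gamma,c\gamma)$; in other words it is already lifted through $(d,c)$ by $\alpha$ on the image of $e_{1}$ and by $\gamma$ on the image of $e_{2}$. Since $(d,c)$ is jointly \emph{strongly} monomorphic—so that it enjoys the diagonal-filling property against jointly epimorphic pairs—and $(e_{1},e_{2})$ is exactly such a pair, the required global filler $\varphi$ exists. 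This establishes (v''), hence all the equivalent conditions of Theorem~\ref{th6.weak}.

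For the converse (2)\To(1) the split pullbacks are given by hypothesis, so it remains to prove that each induced pair $(e_{1},e_{2})$ is jointly epimorphic. Starting from two morphisms $u,v\colon A\times_{B}C\to Z$ with $ue_{1}=ve_{1}$ and $ue_{2}=ve_{2}$, I would form their equaliser $k\colon K\to A\times_{B}C$ (available since equalisers exist); then $e_{1}$ and $e_{2}$ factor through $k$, and the task reduces to showing that $k$ is an isomorphism. The strategy is to manufacture from $k$ and the split data a reflexive strong relation and to feed it into condition (iii'') or (v''), so that the resulting unique multiplication/filler supplies the factorisation certifying $k=1$, whence $u=v$.

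The hard part is precisely this converse. The difficulty is that the compatibility constraints $d\alpha=d\beta f$ and $c\gamma=c\beta g$ in~\eqref{couniv} force the base legs of any test relation to factor through $f$ and through $g$, so the obvious candidates—the pullback itself viewed as the strong relation $(\pi_{1},\pi_{2})$, or the equaliser $K$ with legs $\pi_{1}k$, $\pi_{2}k$—do not fit the diagram when equipped with the tautological data. Moreover, because the ambient hypotheses supply only kernel pairs, equalisers and split pullbacks, and no colimits, joint epimorphy cannot be certified through an image or a pushout; the failure of $(e_{1},e_{2})$ to be jointly epimorphic must instead be detected entirely through the universal property of a filler. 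Designing the test strong relation so that its forced filler witnesses exactly $K=A\times_{B}C$—thereby converting the \emph{existence} of multiplications on strong relations into the \emph{joint epimorphy} that defines weak Mal'tsevness (cf.~\cite{NMF1})—is the step I expect to require the real work.
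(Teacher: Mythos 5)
Your direction (1)\,$\To$\,(2) is essentially fine, and your preliminary observation is correct: for a strong relation the two equations $d\varphi=d\gamma\pi_{2}$, $c\varphi=c\alpha\pi_{1}$ already force uniqueness, and $\varphi e_{1}=\alpha$, $\varphi e_{2}=\gamma$ follow from the commutation checks $d\gamma\pi_{2}e_{1}=d\gamma sf=d\beta f=d\alpha$, $c\alpha\pi_{1}e_{2}=c\alpha rg=c\beta g=c\gamma$. The existence of the filler is then immediate \emph{provided} ``jointly strongly monomorphic'' is taken to mean the diagonal-fill-in property against jointly epimorphic cospans --- which is indeed how \cite{ZurabNelson} sets things up in the absence of products and coproducts, but you should say so explicitly, since orthogonality to single epimorphisms would not suffice here without coproducts. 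Be aware, too, that the paper does not prove this theorem at all: it cites \cite{ZurabNelson} for the equivalence of the weak Mal'tsev axiom with (iv$''$), so you are attempting a proof the authors deliberately outsource.

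The genuine gap is in (2)\,$\To$\,(1), which you leave as an acknowledged open step, and your diagnosis of why the ``obvious candidate'' fails is mistaken: the equaliser $k\colon K\to A\times_{B}C$ of $u$ and $v$, with legs $\pi_{1}k$ and $\pi_{2}k$, is exactly the test relation that works --- one only has to orient it correctly. Take $D=K$, $D_{0}=C$, $D'_{0}=A$, $d=\pi_{2}k$, $c=\pi_{1}k$. Since $ue_{1}=ve_{1}$, $ue_{2}=ve_{2}$ and $ue_{1}r=ve_{2}s$, the morphisms $e_{1}$, $e_{2}$ and $e_{1}r=e_{2}s$ factor through $K$ as $\alpha'$, $\gamma'$, $\beta'$, and the compatibility constraints of (v$''$) hold: $d\alpha'=\pi_{2}e_{1}=sf=d\beta'f$ and $c\gamma'=\pi_{1}e_{2}=rg=c\beta'g$. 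Your objection that the base legs are ``forced to factor through $f$ and $g$'' evaporates because the base objects of this relation are $A$ and $C$ themselves, not $B$. One must check that $(\pi_{1}k,\pi_{2}k)$ is a strong relation; this follows because $A\times_{B}C$ is a limit and $k$ an equaliser: a compatible cocone on a jointly epimorphic pair first factors through the pullback (as $fv_{A}=gv_{C}$ is detected by joint epimorphy) and the resulting $w$ satisfies $uw=vw$ by the same token, hence factors through $K$. Condition (v$''$) then produces $\varphi\colon A\times_{B}C\to K$ with $\pi_{2}k\varphi=d\gamma'\pi_{2}=\pi_{2}$ and $\pi_{1}k\varphi=c\alpha'\pi_{1}=\pi_{1}$, so $k\varphi=1_{A\times_{B}C}$, $k$ is a split epic monomorphism, hence an isomorphism, and $u=v$. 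Without this construction your converse has no content; with it, your proof is complete and self-contained where the paper's is a citation.
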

\begin{proof}
In the presence of equalisers, the weak Mal'tsev axiom is equivalent to Condition~(iv'')---see~\cite{ZurabNelson}.
\end{proof}

Mimicking the argument at the end of the proof of Theorem~\ref{th1.nat}, it is easily seen that in a weakly Mal'tsev category, any internal pregroupoid is associative. The corresponding result for internal multiplicative graphs is treated in the following section.

\section{Internal categories vs.\ internal groupoids}\label{Section-Cat-vs-Grpd}
We prove that, in a weakly Mal'tsev category with kernel pairs and equalisers, internal categories are internal groupoids if and only if every preorder is an equivalence relation.

\begin{theorem}\label{th8.main}
Let $\C$ be a weakly Mal'tsev category with kernel pairs and equalisers. Then:
\begin{enumerate}
\item the forgetful functor
\[
U_{2}\colon\Cat(\C)\to \MG(\C)
\]
is an isomorphism;
\item the forgetful functor
\[
U_{3}\colon\Grpd(\C) \to\Cat(\C)
\]
is an isomorphism if and only if every internal preorder in $\C$ is an equivalence relation.
\end{enumerate}
\end{theorem}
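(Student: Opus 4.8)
The plan is to reduce each statement about the forgetful functors to a property of the underlying structures, and then to verify that property using the weak Mal'tsev axiom in the form recorded by Theorem~\ref{th7.weak}. Both $U_2$ and $U_3$ are fully faithful: a morphism of multiplicative graphs is exactly an internal functor, associativity being a property rather than extra structure, while a morphism of internal categories between two groupoids automatically commutes with the inversion, which is itself uniquely determined by the composition. Consequently $U_2$ is an equivalence precisely when every multiplicative graph satisfies associativity, and $U_3$ is an equivalence precisely when every internal category admits an inversion, that is, is an internal groupoid.

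For (1) I would prove that associativity is automatic. Writing $C_3 = C_1\times_{C_0}C_1\times_{C_0}C_1$, consider the two composites $a_1 = m(m\times 1_{C_1})$ and $a_2 = m(1_{C_1}\times m)$ from $C_3$ to $C_1$. The object $C_3$ is the split pullback of the two split epimorphisms $\pi_2\colon C_2\to C_1$ and $\pi_1\colon C_2\to C_1$, so the weak Mal'tsev property tells us that the two induced sections $C_2\to C_3$---the one inserting an identity in the last place and the one inserting an identity in the first place---are jointly epimorphic. It then suffices to check that $a_1$ and $a_2$ agree after precomposition with each of these two sections; in both cases the identity laws $me_1 = 1_{C_1} = me_2$ collapse the two composites to $m$ followed by the evident projection $C_2\to C_1$, so they coincide. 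Joint epimorphism forces $a_1 = a_2$. This is the uniqueness-of-multiplication phenomenon of~\cite{NMF1}, applied here to the two candidate ternary composites.

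For (2), the implication from left to right is immediate: an internal preorder is in particular an internal category, so if every internal category is a groupoid then every preorder is a relation which is also a groupoid, i.e.\ an equivalence relation. For the converse I would start from an arbitrary internal category, with data as in~\eqref{multgraph}, and aim to produce its inversion $t\colon C_1\to C_1$---equivalently, to show that the two squares $dm = d\pi_2$ and $cm = c\pi_1$ are pullbacks---by exploiting the hypothesis that every internal preorder is symmetric.

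The hard part will be bridging from relations (preorders) to a general, not necessarily jointly monomorphic, internal category, while using only kernel pairs, equalisers and split pullbacks: no binary products and, crucially, no coequalisers are available, so no image factorisation of $\langle d,c\rangle$ can be invoked to pass to an honest relation. The strategy is to attach to the given internal category an internal preorder whose symmetry is equivalent to the invertibility of the category, and then to transport the equivalence-relation structure supplied by the hypothesis back to an inversion on $C_1$. Here the weak Mal'tsev property does the decisive work twice: on the one hand it guarantees, as in Theorem~\ref{th6.weak}, that the relevant comparison morphisms are uniquely induced; on the other hand, the joint epimorphism of its sections is exactly what permits the passage from the jointly monomorphic setting to arbitrary internal categories, letting one verify the groupoid identities $m\langle 1_{C_1},t\rangle = ec$ and $m\langle t,1_{C_1}\rangle = ed$ by testing them against a jointly epimorphic family rather than against honest elements.
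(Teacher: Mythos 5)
Your treatment of part (1) is correct and is essentially the argument of~\cite{NMF1} that the paper cites rather than reproduces: associativity is tested against the jointly epimorphic pair of sections of the split pullback $C_3=C_2\times_{C_1}C_2$ of $\pi_2$ along $\pi_1$, and the identity laws $me_1=1_{C_1}=me_2$ collapse both ternary composites. The easy direction of part (2) is also fine.

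For the converse in part (2), however, there is a genuine gap. You correctly identify the obstacle---one must manufacture an honest internal \emph{preorder} out of a category whose legs $(d,c)$ need not be jointly monic, with no image factorisation available---but you never resolve it: you do not say which preorder to attach to the category, nor why it is a relation. The paper's solution is Lemma~\ref{Lemma-Relation}: in a weakly Mal'tsev category with equalisers the multiplication of any internal category is cancellable on both sides, i.e.\ $\langle\pi_1,m\rangle\colon C_2\to C_1\times_c C_1$ and $\langle m,\pi_2\rangle\colon C_2\to C_1\times_d C_1$ are monomorphisms. This lemma is itself nontrivial: its proof forms, via an equaliser, a strong relation on $C_2$ and invokes difunctionality of strong relations from Theorem~\ref{th6.weak}, following the argument of Carboni--Pedicchio--Pirovano. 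Granting the lemma, the reflexive graph on $C_1$ with legs $\pi_1$, $m$ and reflexivity $e_1$ becomes a reflexive \emph{relation}; it is transitive because the graph is multiplicative, hence a preorder, hence by hypothesis an equivalence relation. Its symmetry is a morphism $\tau=\langle m,q\rangle\colon C_2\to C_2$ with $m\tau=\pi_1$, and $t=qe_2$ is the required inversion, since $ct=cqe_2=dme_2=d$ and $m\langle 1_{C_1},t\rangle=m\langle m,q\rangle e_2=\pi_1e_2=ec$. Your closing suggestion that the joint epimorphism of the pullback sections lets one verify the groupoid identities against a jointly epimorphic family does not substitute for this: without the cancellability lemma the hypothesis on preorders simply cannot be brought to bear on a general internal category.
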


Part (1) of this result was already obtained in~\cite{NMF1} where the definition of multiplicative graph does not include the conditions $dm=d\pi_2$ and $cm=c\pi_1$. Indeed, in this context they automatically hold. The proof of Part~(2) depends on the following lemma.

\begin{lemma}\label{Lemma-Relation}
Let $\C$ be a weakly Mal'tsev category with equalisers. Given a category~\eqref{multgraph} in $\C$, the morphisms
\begin{equation*}
\langle \pi_1,m\rangle\colon{C_{2}\to C_{1}\times_{c}C_{1}}\quad\text{and}\quad \langle m,\pi_2\rangle\colon C_{2}\to C_{1}\times_{d}C_{1}
\end{equation*}
are monomorphisms; this means that the multiplication is cancellable
on both sides.
\end{lemma}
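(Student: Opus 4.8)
The plan is to prove the first assertion, that $\langle \pi_1, m\rangle\colon C_2\to C_1\times_c C_1$ is a monomorphism, and to obtain the second dually: applying the same argument to the opposite internal category (which swaps $d$ with $c$ and $\pi_1$ with $\pi_2$, and is again an internal category in the same $\C$) turns left cancellability into right cancellability and so disposes of $\langle m,\pi_2\rangle$. Unravelling the definitions, $\langle \pi_1, m\rangle$ being monic says exactly that $m$ is left cancellable: two composable pairs with the same first leg $\pi_1$ and the same product $m$ must coincide. First I would pass to the kernel pair of $\langle \pi_1, m\rangle$ and reduce the claim to the equality of its two projections. This kernel pair is available despite the absence of general pullbacks: since $\pi_1$ is split by $e_1$, the pullback $Q=C_2\times_{\pi_1}C_2$ exists by the split pullback hypothesis, and the kernel pair $E$ of $\langle \pi_1, m\rangle$ is cut out inside $Q$ as the equaliser of $mq_1$ and $mq_2$, where $q_1,q_2\colon Q\to C_2$ are the projections; this equaliser exists because $\C$ has equalisers. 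Writing $r_1=q_1|_E$ and $r_2=q_2|_E$, the morphism $\langle \pi_1, m\rangle$ is monic exactly when $r_1=r_2$, and since $\pi_1 r_1=\pi_1 r_2$ holds by construction everything reduces to the single equality $\pi_2 r_1=\pi_2 r_2$.

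The substantive step is to force this equality out of the weak Mal'tsev property. Here the tool is Theorem~\ref{th7.weak}, by which weak Mal'tsev is equivalent to the conditions of Theorem~\ref{th6.weak}; in particular the \emph{uniqueness} clause of condition~(v$''$) is just the statement that the sections $(e_1,e_2)$ into a split pullback are jointly epimorphic. The observation I would try to leverage is that the unit laws $me_1=1_{C_1}=me_2$ trivialise cancellation in the two degenerate cases: whenever one of the two legs of a composable pair is an identity, the equation $m(a,b)=m(a,b')$ collapses to $b=b'$ at once. The plan is to organise the data carried by $E$ into an equality between two morphisms out of a single split pullback whose two jointly epimorphic sections land precisely in these degenerate cases, so that joint epimorphism upgrades the two ``boundary'' equalities into the desired global equality $\pi_2 r_1=\pi_2 r_2$. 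Associativity of $m$, which is part of the hypothesis that we are given an internal category, is what should make this propagation from the degenerate cases to the generic one coherent.

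I expect the main obstacle to be exactly this bridge. The weak Mal'tsev property, in all of its equivalent forms, controls morphisms \emph{out of} a split pullback, whereas cancellability is an injectivity statement about morphisms \emph{into} $C_2$, and the two do not combine formally. Two tempting shortcuts both fail and must be avoided: $\langle \pi_1, m\rangle$ is not a split monomorphism, since a retraction would amount to a left division $(a,z)\mapsto(a,a\backslash z)$ that exists only in the groupoid case; and the obvious realisation of $E$ as a pullback would require $\langle \pi_1, m\rangle$ itself to be a split epimorphism, so $E$ is not directly a split pullback and the jointly epimorphic sections do not apply to it. The real content is therefore to manufacture, from the equation $mr_1=mr_2$ on $E$, an auxiliary configuration of composable arrows to which joint epimorphism \emph{does} apply, transporting the cancellation supplied by the unit laws at the boundary to the generic case; I anticipate that constructing this configuration, rather than the reduction leading up to it, is where the weak Mal'tsev hypothesis is genuinely consumed.
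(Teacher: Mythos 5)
Your reduction is sound and in fact coincides with the paper's own setup: the object you call $E$ --- the equaliser of $mq_1$ and $mq_2$ on the kernel pair of $\pi_1$, which exists because $\pi_1$ is split by $e_1$ --- is exactly the relation the paper calls $S$, collapsing it is indeed equivalent to $\langle\pi_1,m\rangle$ being monic, and obtaining $\langle m,\pi_2\rangle$ by passing to the opposite internal category is fine. But your argument stops precisely where it would have to begin. You announce that you will ``manufacture an auxiliary configuration of composable arrows to which joint epimorphism does apply'' and then explicitly defer that construction to later; that construction is the entire content of the lemma, so what you have is a correct reformulation of the statement followed by a genuine gap, not a proof.

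The missing idea is the following. The pair $(s_1,s_2)\colon{S\to C_2}$ is a \emph{strong} relation (an equaliser, hence a strong monomorphism, followed by the jointly monic kernel-pair projections), so Theorems~\ref{th7.weak} and~\ref{th6.weak} apply to it: in a weakly Mal'tsev category with equalisers every strong relation is difunctional. Difunctionality of $S$ --- not a bare joint-epimorphism argument on some unspecified split pullback --- is the bridge between the ``morphisms out of a pullback'' form of the weak Mal'tsev axiom and the injectivity statement you need; your own observation that the two ``do not combine formally'' is exactly why the paper routes through strong relations rather than applying the axiom directly. Concretely, given $\pi_1x=\pi_1y$ and $mx=my$, one writes down the explicit morphism $i\colon{S\to SS^{-1}S}$ built from the units (sending a pair of composable pairs with equal first legs and equal composites to the configuration involving $(1,1)$, $(1,x_1)$, $(x_1,x_2)$ and $(1,1)$, $(1,y_1)$, $(y_1,y_2)$), and the difunctionality morphism $p\colon{SS^{-1}S\to S}$ then shows that $\langle e_2\pi_2x,e_2\pi_2y\rangle$ factors through $S$; since elements of $S$ have equal composites and $me_2=1_{C_1}$, this yields $\pi_2x=\pi_2y$. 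This is the Carboni--Pedicchio--Pirovano argument the paper cites. Your instinct that the pairs with an identity leg are where cancellation is free is pointing in the right direction --- that is exactly what the factorisation through $i$ and $p$ exploits --- but associativity plays no role in it, and without exhibiting $i$ and invoking difunctionality of the strong relation $S$ the argument does not close.
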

\begin{proof}
We shall prove $\langle \pi_1,m\rangle $ is a monomorphism. A similar argument
shows the same for $\langle m,\pi_2\rangle $.

First observe that the kernel pairs $C_1\times_cC_1$, $C_1\times_dC_1$, $C_2\times_mC_2$, $C_2\times_{\pi_1}C_2$ and $C_2\times_{\pi_2}C_2$ exist because $c$, $d$, $m$, $\pi_1$ and $\pi_2$ are split epimorphisms. To prove that $\langle \pi_1,m\rangle $ is a monomorphism is the same as proving for every $x$, $y\colon {Z \to C_2}$ that
\begin{equation*}
\left.\begin{aligned}
&\pi_1 x=\pi_1 y\\
&m x=m y\end{aligned} \right\}
\quad\Rightarrow\quad 
\pi_2 x = \pi_2 y.
\end{equation*}
Assuming that $\pi_1 x=\pi_1 y$ we have induced morphisms
\[
\langle x,y\rangle
\quad\text{and}\quad
\langle e_2\pi_2 x,e_2 \pi_2 y\rangle\colon Z\to C_2\times_{\pi_1}C_2.
\]
Indeed, $\pi_1 e_2 \pi_2 x = \pi_1 e_2 \pi_2 y$ as $\pi_1 e_2 \pi_2 = ec \pi_2 =ed\pi_1$. Considering the equaliser $(S,\langle s_1,s_2\rangle)$ of the pair of morphisms
\[
\xymatrix@!0@=5em{C_2\times_{\pi_1}C_2 \ar@<.5ex>[r] \ar@<-.5ex>[r] & C_2 \ar[r]^{m} &
C_1,}
\]
and identifying $C_2\times_{\pi_1}C_2$ with $C_1\times_{C_0}(C_1\times_cC_1)$ we obtain a strong relation
\begin{equation*}
\xymatrix@!0@=3em{& S \ar[dl]_{s_1} \ar[rd]^{s_2} \\ C_1 & & C_1\times_{c}C_1}
\end{equation*}
which may be pictured as
\begin{equation*}
\xymatrix@1{ \cdot \ar@{=}[d] & \cdot \ar@{=}[d] \ar[l]_-{x_1} & \cdot
\ar[l]_-{x_2} \\
\cdot & \cdot \ar[l]^-{y_1} & \cdot \ar[l]^-{y_2}
}
\end{equation*}
with $x_1=y_1$ and $(x_{1}=y_{1})S(x_{2},y_{2})$ if and only if $x_1x_2=y_1y_2$.

By Theorem~\ref{th6.weak}, this relation, being a strong relation,
is also difunctional and the argument used on page 103 of~\cite{Carboni-Pedicchio-Pirovano} also applies
here to show that
\[
\langle e_2\pi_2x,e_2\pi_2y\rangle =\langle s_1,s_2\rangle pi \overline{\langle x,y\rangle },
\]
where $p\colon{SS^{-1}S\to S}$ is obtained by difunctionality, $\overline{\langle x,y\rangle }\colon{Z\to S}$ is the factorisation of $\langle x,y\rangle $ through the equaliser (we are assuming that $mx=my$), and the morphism $i\colon{S\to SS^{-1}S}$, which sends $(x_{1}=y_{1})S(x_{2},y_{2})$ to \[(1=1)S(1,1)S^{-1}(x_{1}=y_{1})S(x_{2},y_{2}),\] may be pictured as follows.
\[\xymatrix{&&\cdot \ar[ld]_{1}\ar[dr]^{1}&&&& \cdot \ar[ld]_{x_2} \\ \cdot & \cdot \ar[l]_{1}&& \cdot \ar[r]_{y_1}^{x_1} & \cdot & \cdot \ar[l]_{x_1}^{y_1}\\
&&\cdot \ar[lu]^{1} \ar[ru]_{1}&&&& \cdot\ar[lu]^ {y_2} 
}\]
This proves that $\langle e_2\pi_2x,e_2\pi_2y\rangle $ factors through the equaliser $S$, so we may conclude that
\[
m e_2\pi_2x=m e_2\pi_2y,
\]
or $\pi_2 x= \pi_2 y$ as desired.\end{proof}
 
\begin{proof}[Proof of Theorem~\ref{th8.main}]
If the functor $U_{3}$ is an isomorphism then in particular any preorder is an equivalence relation. For the converse, assume that every preorder is an equivalence relation (and every strong relation is difunctional). Given any category~\eqref{multgraph} we shall prove that it is a groupoid. For this to happen it suffices that there is a morphism $t\colon C_1\to C_1$ with $ct=d$ and $m\langle 1_{C_{1}},t\rangle =ec$ (see, for instance,~\cite{NMF1}).

By Lemma~\ref{Lemma-Relation} we already know that the morphisms
$\langle m,\pi_2\rangle$ and $\langle \pi_1,m\rangle $ are monomorphisms. This means that the
reflexive graph
\begin{equation*}
\xymatrix@!0@=4em{ C_2 \ar@<1ex>[r]^-{m} \ar@<-1ex>[r]_-{\pi_1} & C_1
\ar[l]|-{e_1}}
\end{equation*}
is a reflexive relation, and since it is transitive---by assumption it is a multiplicative graph---it is an equivalence relation. Hence there is a morphism
\begin{equation*}
\tau=\langle m,q\rangle\colon{C_2 \to C_2}
\end{equation*}
such that $m\tau = \pi_1$. Now $t=q e_2$ is the needed morphism ${C_{1}\to C_{1}}$. Indeed $dm=cq$, because $\langle m,q\rangle $ is a morphism into the pullback
$C_2$, so that 
\[
ct=cqe_2=dme_2=d;
\]
furthermore,
\[
m\langle 1_{C_{1}},t\rangle =m\langle me_2,qe_2\rangle =m\langle m,q\rangle e_2=\pi_{1} e_2 = ec,
\]
which completes the proof.
\end{proof}

\begin{remark}
In general, a category can be weakly Mal'tsev without Condition~(2) of Theorem~\ref{th8.main} holding. For instance, in the category of commutative monoids with cancellation, the relation $\leq$ on the monoid of natural numbers~$\N$ is a preorder which is not an equivalence relation.
\end{remark}

\begin{remark}
It is possible for a category to satisfy both Condition~(1) and Condition~(2) of Theorem~\ref{th8.main} without being Mal'tsev: see the following section.
\end{remark}

\section{The varietal case}\label{Section-Varieties}
When we restrict to varieties, the condition ``every internal preorder is an equivalence relation'' singled out in part (2) of Theorem~\ref{th8.main} is known to be equivalent to the variety being $n$-permutable for some $n$. We explain how to prove this when passing via a characterisation of $n$-permutability due to Hagemann.

\subsection{Finitary quasivarieties} 
Just like a variety of algebras is determined by certain identities between terms, a quasivariety also admits quasi-identities in its definition, i.e., expressions of the form
\[
\resizebox{\textwidth}{!}
{\mbox{$
\left.\begin{aligned}
v_{1}(x_{1},\dots, x_{k})&=w_{1}(x_{1},\dots, x_{k})\\
&\;\;\vdots\\
v_{n}(x_{1},\dots, x_{k})&=w_{n}(x_{1},\dots, x_{k})\end{aligned} \right\}
\quad\Rightarrow\quad v_{n+1}(x_{1},\dots, x_{k})=w_{n+1}(x_{1},\dots, x_{k})
$}}
\]
---see, for instance, \cite{Maltsev} for more details. It is well known that any quasivariety may be obtained as a regular epi-reflective subcategory of a variety, and more generally the sub-quasivarieties of a quasivariety correspond to its regular epi-reflective subcategories. In particular, sub-quasivarieties are closed under subobjects.

\subsection{$n$-Permutable varieties}
The following equivalent conditions due to Hagemann~\cite{Hagemann-Mitschke} describe what it means for a variety to be \defn{$n$-permutable}. (Recall that $2$-permutability is just the Mal'tsev property and a regular category which is $3$-permutable is called \defn{Goursat}~\cite{Carboni-Kelly-Pedicchio}.)

\begin{proposition}\label{Proposition-n-Permutable}
For a finitary quasivariety $\V$ and a natural number $n\geq 2$, the following are equivalent:
\begin{enumerate}
\item for any two equivalence relations $R$ and $S$ on an object $A$, we have $(R,S)_{n}=(S,R)_{n}$;
\item there exist $n-1$ terms $w_{1}$, \dots, $w_{n-1}$ in $\V$ such that
\[
\left\{\begin{aligned}
&w_{1}(x,z,z)=x\\
&w_{i}(x,x,z)=w_{i+1}(x,z,z)\\
&w_{n-1}(x,x,z)=z;
\end{aligned}\right.
\]
\item for any reflexive relation~$R$, we have $R^{-1}\subset R^{n-1}$.
\end{enumerate}
\end{proposition}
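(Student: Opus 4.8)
The plan is to treat condition~(2), the existence of Hagemann--Mitschke terms, as the hub, establishing $(2)\Leftrightarrow(3)$ and $(1)\Leftrightarrow(2)$. Throughout I would use two standard facts about a finitary quasivariety $\V$: a relation on an algebra is a subalgebra of a product, hence closed under the operations taken coordinatewise; and in a free algebra $F_\V(X)$ (which $\V$ possesses) two terms represent the same element exactly when the corresponding identity holds in $\V$.

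For $(2)\Rightarrow(3)$, given a reflexive relation $R$ on $A$ and $(\beta,\alpha)\in R$, I would feed the three elements $(\alpha,\alpha)$, $(\beta,\alpha)$, $(\beta,\beta)$ of the subalgebra $R\le A\times A$ into each $w_i$, producing pairs $(a_i,b_i):=(w_i(\alpha,\beta,\beta),w_i(\alpha,\alpha,\beta))\in R$. The identities then give $a_1=\alpha$, $b_i=a_{i+1}$ and $b_{n-1}=\beta$, so that $\alpha=a_1\mathrel{R}b_1=a_2\mathrel{R}b_2\cdots\mathrel{R}b_{n-1}=\beta$ witnesses $(\alpha,\beta)\in R^{n-1}$. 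Conversely, for $(3)\Rightarrow(2)$ I would work in $F:=F_\V(x,z)$ and take $R$ to be the reflexive relation generated by the diagonal together with the pair $(z,x)$; its elements are exactly the pairs $(t(x,z,z),t(x,z,x))$. Since $(x,z)\in R^{-1}$, condition~(3) yields a chain $x=v_0,\dots,v_{n-1}=z$ with $v_{j-1}=t_j(x,z,z)$ and $v_j=t_j(x,z,x)$; setting $w_j(x,y,z):=t_j(x,z,y)$ turns these equalities of elements of $F$ into precisely the identities of~(2).

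To bring in condition~(1), the direction $(2)\Rightarrow(1)$ is a direct term computation: by the $R\leftrightarrow S$ symmetry it suffices to prove $(R,S)_n\subseteq(S,R)_n$, and a chain $a=u_0\mathrel{R}u_1\mathrel{S}\cdots u_n=b$ is rewritten into an $(S,R)_n$-chain by applying the $w_i$ to the $u_j$, exactly as in the Mal'tsev case $n=2$ where $a\mathrel{S}w_1(a,u_1,b)\mathrel{R}b$. For $(1)\Rightarrow(2)$ I would pass to $F_\V(x,y,z)$ with the congruences $R=\mathrm{Cg}(x,y)$ and $S=\mathrm{Cg}(y,z)$, whose quotient maps are $y\mapsto x$ and $y\mapsto z$. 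Since $x\mathrel{R}y\mathrel{S}z$ we have $(x,z)\in(R,S)_n=(S,R)_n$, and a witnessing chain, read through the two quotients, delivers terms together with equalities between their substitution instances $(x,x,z)$ and $(x,z,z)$.

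The hard part will be matching this extracted data to the precise telescoping shape of~(2). The difficulty is structural: because $R$ and $S$ are symmetric, consecutive terms in a witnessing chain are linked through the \emph{same} substitution face, whereas the identities $w_i(x,x,z)=w_{i+1}(x,z,z)$ link them through \emph{opposite} faces, and reversing a single face amounts to a Mal'tsev operation, which is unavailable for $n>2$. I expect this reconciliation---presumably the reason no proof surfaced in the literature---to require exploiting the freedom in the choice of witnessing chain together with the symmetry of $R$ and $S$ (so that individual steps may be traversed in either direction), or else routing the implication through the already established equivalence $(2)\Leftrightarrow(3)$ and the transitive-closure and difunctionality machinery of the preceding subsections.
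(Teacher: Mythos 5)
Your treatment of $(2)\Leftrightarrow(3)$ is correct and is essentially the paper's own argument. For $(2)\Rightarrow(3)$ you apply the $w_i$ coordinatewise to the three elements $(\alpha,\alpha),(\beta,\alpha),(\beta,\beta)$ of $R$ and telescope, exactly as in the paper. For $(3)\Rightarrow(2)$ you use the generic reflexive relation on the free algebra $F_\V(x,z)$; the paper generates it from $(x,z)$ and describes its elements as the pairs $(w(x,x,z),w(x,z,z))$, while you generate it from $(z,x)$ and get the pairs $(t(x,z,z),t(x,z,x))$ --- the two relations are each other's opposites and the difference is absorbed by your substitution $w_j(x,y,z):=t_j(x,z,y)$, so this is only a change of orientation. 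Since this equivalence is precisely the part the authors say they could not locate in the literature, you have reproduced the substantive content of the proposition.

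The gap is in your handling of $(1)\Leftrightarrow(2)$, which you set up as one of the two spokes of your hub but do not complete. Here the paper does something you could also have done: it simply invokes Theorem~2 of Hagemann and Mitschke for $(1)\Leftrightarrow(2)$ and proves nothing. Your diagnosis of why the naive free-algebra argument for $(1)\Rightarrow(2)$ fails is accurate --- with $R=\mathrm{Cg}(x,y)$ and $S=\mathrm{Cg}(y,z)$ on $F_\V(x,y,z)$, a witnessing chain for $(x,z)\in(S,R)_n$ yields equalities of the form $t_i(x,x,z)=t_{i+1}(x,x,z)$ or $t_i(x,y,y)=t_{i+1}(x,y,y)$, i.e.\ consecutive terms agree on the \emph{same} face, whereas the Hagemann--Mitschke identities require agreement across opposite faces, and for $n=3$ already the obvious repair would need a Mal'tsev term. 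But identifying the obstruction is not the same as overcoming it, and your proposed escape routes are not obviously viable either: in particular, routing through $(2)\Leftrightarrow(3)$ presupposes an implication from~(1), which speaks only of equivalence relations, to~(3), which quantifies over all reflexive relations, and that implication is itself not immediate. Your sketch of $(2)\Rightarrow(1)$ (``applying the $w_i$ to the $u_j$'') is also too thin to check, though that direction is standard. As written, then, your proof establishes $(2)\Leftrightarrow(3)$ but leaves condition~(1) disconnected; the fix is either to carry out the Hagemann--Mitschke argument in full or, as the paper does, to cite it explicitly rather than leaving the implication as an expectation.
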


In fact, this result is valid in regular categories, as shown in~\cite{JRVdL1}. Also the following result is known~\cite{CR}:

\begin{proposition}\label{Proposition-Permutable}
For a finitary quasivariety $\V$, the following are equivalent:
\begin{enumerate}
\item in $\V$, every internal preorder is an equivalence relation;
\item $\V$ is $n$-permutable for some $n$.
\end{enumerate}
\end{proposition}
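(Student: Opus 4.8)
The plan is to establish the two implications separately, with Proposition~\ref{Proposition-n-Permutable} serving as the bridge between the internal statement about preorders and the syntactic characterisations of $n$-permutability. The implication (2)~$\To$~(1) will be almost immediate from condition~(3) of that proposition, while (1)~$\To$~(2) will require the transitive closure construction, applied to the \emph{generic} reflexive relation on the free algebra on two generators, exactly as in the proof of Proposition~\ref{Proposition-n-Permutable}.

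For (2)~$\To$~(1), I would start from an internal preorder $R$ on an algebra $A$, that is, a reflexive and transitive relation. Transitivity gives $R^{2}\subset R$, hence $R^{k}\subset R$ for every $k\geq 1$, and in particular $R^{n-1}\subset R$. Since $\V$ is $n$-permutable, Proposition~\ref{Proposition-n-Permutable} yields its condition~(3), namely $R^{-1}\subset R^{n-1}$. Combining these, $R^{-1}\subset R$, which is precisely the statement that $R$ is symmetric; so $R$ is an equivalence relation.

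For (1)~$\To$~(2), the plan is to feed the right reflexive relation into assumption~(1). I would take $A$ to be the free algebra on $\{x,z\}$ and let $R$ be the relation consisting of all couples $(w(x,x,z),w(x,z,z))$ for $w$ a ternary term, which (as verified in the proof of Proposition~\ref{Proposition-n-Permutable}) is a reflexive relation containing the couple $(x,z)$. Its transitive closure $\overline{R}=\bigcup_{n}R^{n}$ is again a relation on $A$---here finitariness of $\V$ is essential---and by construction it is a preorder. Assumption~(1) then forces $\overline{R}$ to be an equivalence relation, hence symmetric, so from $(x,z)\in R\subset\overline{R}$ we obtain $(z,x)\in\overline{R}=\bigcup_{n}R^{n}$. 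As this is a set-theoretic union, $(z,x)$ already lies in $R^{m}$ for some \emph{finite} $m$, and unravelling membership of $(z,x)$ in $R^{m}$ produces ternary terms $w_{1},\dots,w_{m}$ witnessing condition~(2) of Proposition~\ref{Proposition-n-Permutable} with $n=m+1$. Hence $\V$ is $(m+1)$-permutable.

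The step I expect to be the crux is this last finiteness argument. Assumption~(1) only tells us that $(z,x)$ sits inside the \emph{infinite} union $\bigcup_{n}R^{n}$, whereas $n$-permutability requires a single $n$ valid for all algebras at once. The device that overcomes this is the use of the generic relation on a free algebra: membership of one specific pair in some finite composite $R^{m}$ is equivalent to the existence of genuine terms $w_{1},\dots,w_{m}$, and by Proposition~\ref{Proposition-n-Permutable} these terms impose $(m+1)$-permutability throughout $\V$. Everything else reduces to the two bookkeeping facts that transitivity collapses all powers $R^{k}$ into $R$ and that the containment $R^{-1}\subset R$ is the same as symmetry.
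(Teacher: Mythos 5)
Your proposal is correct and follows essentially the same route as the paper's proof: the implication (2)~$\Rightarrow$~(1) via condition~(3) of Proposition~\ref{Proposition-n-Permutable} together with $R^{n-1}\subset R$ for a transitive $R$, and the implication (1)~$\Rightarrow$~(2) via the generic reflexive relation of pairs $(w(x,x,z),w(x,z,z))$ on the free algebra on two generators, whose transitive closure is a preorder whose symmetry extracts the finitely many witnessing terms. The finiteness point you flag as the crux is exactly the one the paper handles with the set-theoretic description of $\overline{R}=\bigcup_{n}R^{n}$ in a finitary quasivariety.
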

\begin{proof}
By Proposition~\ref{Proposition-n-Permutable}, if Condition (2) holds then for every reflexive relation~$R$ in $\V$ we have that $R^{-1}\subset R^{n-1}$. Now if $R$ is transitive then ${R^{n-1}\subset R}$, so that $R^{-1}\subset R$, which means that $R$ is symmetric.

To prove the converse, suppose that every internal preorder in $\V$ is an equivalence relation. Let $A$ be the free algebra on the set $\{x,z\}$ and let~$R$ be the reflexive relation on $A$ consisting of all pairs
\[
(w(x,x,z),w(x,z,z))
\]
for $w$ a ternary term. Then the pair $(x,z)$ is in~$R$. By assumption, the transitive closure~$\overline{R}$ of~$R$ is also symmetric, hence contains the pair $(z,x)$. This means that $(z,x)$ may be expressed through a chain of finite length in~$R$. More precisely, there exists a natural number $n$ and ternary terms $w_{1}$, \dots, $w_{n-1}$ such that
\begin{multline*}
z=w_{n-1}(x,x,z)Rw_{n-1}(x,z,z)=w_{n-2}(x,x,z)Rw_{n-2}(z,z,x)=\\
\dots=w_{1}(x,x,z)Rw_{1}(x,z,z)=x.
\end{multline*}
By Proposition~\ref{Proposition-n-Permutable} this means that $\V$ is $n$-permutable.
\end{proof}

\begin{remark}
This of course raises the question whether a similar result would hold in a purely categorical context. It seems difficult to obtain the number $n$ which occurs in Condition (2) of Proposition~\ref{Proposition-Permutable} without using free algebra structures, which are not available in general. And indeed, a counterexample exists~\cite{MFRVdL1}. On the other hand, the implication (2) \implies\ (1) admits a proof which is \emph{almost} categorical---but depends on a characterisation of $n$-permutability for regular categories as in Condition~(3) of Proposition~\ref{Proposition-n-Permutable}. This is the subject of the articles~\cite{RVdL4} and~\cite{JRVdL1}.
\end{remark}

\begin{remark}
Through Theorem~\ref{th8.main}, this result implies that in an $n$-permu\-table weakly Mal'tsev variety, every internal category is an internal groupoid. On the other hand, using different techniques, and without assuming the weak Mal'tsev condition, Rodelo recently proved that in any $n$-permutable variety, internal categories and internal groupoids coincide~\cite{Rodelo:Internal-categories}. Whence the question: how different are $n$-permutable varieties from weakly Mal'tsev ones? The only thing we know about this so far is that the two conditions together are not strong enough to imply that the variety is Mal'tsev (see Example~\ref{All-but-Maltsev}). Further note that the conditions (IC1) and (IC2) considered in the paper~\cite{Rodelo:Internal-categories}, that is, $dm =d\pi_2$ and $cm = c\pi_1$ in~\eqref{multgraph}, come for free in a weakly Mal'tsev category. Outside this context, however, it is no longer clear whether or not they will always hold.
\end{remark}

\subsection{Constructing weakly Mal'tsev quasivarieties}

A $3$-permutable (qua\-si)va\-ri\-ety always contains a canonical subvariety which is also weakly Mal'tsev. This allows us to construct examples of weakly Mal'tsev categories which are $3$-permutable but not $2$-permutable---thus we see, in particular, that in a weakly Mal'tsev category~$\C$, categories and groupoids may coincide, even without~$\C$ being Mal'tsev.

\begin{proposition}\label{Proposition-Subvariety}
Let $\V$ be a Goursat finitary quasivariety with $w_{1}$, $w_{2}$ the terms obtained using Proposition~\ref{Proposition-n-Permutable}. Then the sub-quasivariety $\W$ of~$\V$ defined by the quasi-identity
\[
\left.\begin{aligned}
&w_{1}(x,a,b)=w_{2}(a,b,c)=w_{1}(x',a,b)\\
&w_{2}(b,c,x)=w_{1}(a,b,c)=w_{2}(b,c,x')\end{aligned} \right\}
\quad\Rightarrow\quad x=x'
\]
is weakly Mal'tsev.
\end{proposition}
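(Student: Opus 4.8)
The plan is to check the defining property of a weakly Mal'tsev category directly, namely that in every split pullback~\eqref{splitsquare} the pair $(e_{1},e_{2})$ of sections into $E$ is jointly epimorphic. A finitary quasivariety is complete, so $\W$ certainly has split pullbacks, computed as in the underlying category of sets; thus I may describe $E=A\times_{B}C$ as the algebra of pairs $(a,c)$ with $f(a)=g(c)$, on which $e_{1}(a)=(a,sf(a))$ and $e_{2}(c)=(rg(c),c)$. Recall that the Goursat terms inherited from $\V$ satisfy $w_{1}(x,z,z)=x$, $w_{1}(x,x,z)=w_{2}(x,z,z)$ and $w_{2}(x,x,z)=z$. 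To prove joint epimorphism I would take homomorphisms $\varphi,\psi\colon E\to Z$ with $Z$ in $\W$ such that $\varphi e_{1}=\psi e_{1}$ and $\varphi e_{2}=\psi e_{2}$, fix an arbitrary element $(a,c)$ of $E$, write $b\mathrel{:=}f(a)=g(c)$, and aim to conclude that $\varphi(a,c)=\psi(a,c)$.

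The key is to surround the target $(a,c)$ with three elements lying in the images of $e_{1}$ and $e_{2}$, hence on which $\varphi$ and $\psi$ already agree:
\[
P_{1}=e_{1}(a)=(a,sb),\qquad P_{2}=(rb,sb),\qquad P_{3}=e_{2}(c)=(rb,c),
\]
where $P_{2}=e_{1}(rb)=e_{2}(sb)$ lies in both images. Computing coordinatewise in $E$ and using the three term identities above, I would verify the two equalities
\[
w_{1}\bigl((a,c),P_{1},P_{2}\bigr)=w_{2}(P_{1},P_{2},P_{3})
\]
and
\[
w_{2}\bigl(P_{2},P_{3},(a,c)\bigr)=w_{1}(P_{1},P_{2},P_{3}),
\]
the first having both sides equal to $(w_{2}(a,rb,rb),c)$ and the second having both sides equal to $(a,w_{2}(sb,c,c))$.

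Applying $\varphi$ and $\psi$ to these two equalities and abbreviating $\alpha=\varphi P_{1}=\psi P_{1}$, $\beta=\varphi P_{2}=\psi P_{2}$ and $\gamma=\varphi P_{3}=\psi P_{3}$, I obtain
\[
w_{1}(\varphi(a,c),\alpha,\beta)=w_{2}(\alpha,\beta,\gamma)=w_{1}(\psi(a,c),\alpha,\beta)
\]
and
\[
w_{2}(\beta,\gamma,\varphi(a,c))=w_{1}(\alpha,\beta,\gamma)=w_{2}(\beta,\gamma,\psi(a,c)).
\]
These are exactly the premises of the quasi-identity defining $\W$, read in $Z$ with $x=\varphi(a,c)$ and $x'=\psi(a,c)$; since $Z$ lies in $\W$ the conclusion yields $\varphi(a,c)=\psi(a,c)$, and as $(a,c)$ was arbitrary the pair $(e_{1},e_{2})$ is jointly epimorphic. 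The genuinely delicate point is the choice of the elements $P_{1}$, $P_{2}$, $P_{3}$ together with the two mixed bracketings of $w_{1}$ and $w_{2}$: a single ternary Mal'tsev-type operation cannot return the correct value in both coordinates at once, so the heart of the argument is to see that these two combinations of $w_{1}$ and $w_{2}$, after the Goursat identities are applied, collapse to precisely the pattern occurring in the defining quasi-identity. Once they are pinned down, the remaining verifications are routine coordinatewise calculations.
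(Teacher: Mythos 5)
Your proposal is correct and follows essentially the same route as the paper: both proofs verify joint epimorphy of $(e_{1},e_{2})$ on elements by producing the two mixed $w_{1}$/$w_{2}$ identities whose images under $\varphi$ and $\psi$ form exactly the premises of the defining quasi-identity. The only cosmetic difference is that you establish the identities inside $A\times_{B}C$ first and then apply the homomorphisms, whereas the paper expands $w_{1}(\varphi(a,c),\alpha(a),\beta(b))$ directly using the homomorphism property of $\varphi$; the coordinatewise computations and the final appeal to the quasi-identity are identical.
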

\begin{proof}
For any split pullback
\[
\vcenter{\xymatrix@!0@=4em{
A \times_{B} C \ar@<.5ex>[r]^-{p_2} \ar@<-.5ex>[d]_-{p_1} & C \ar@<.5ex>[l]^-{e_2}
\ar@<-.5ex>[d]_-{g}
 \\
A
 \ar@<.5ex>[r]^-{f} \ar@<-.5ex>[u]_-{e_1}
& B
 \ar@<.5ex>[l]^-{r} \ar@<-.5ex>[u]_-{s}
 }}
\]
we have to show that $e_{1}$ and $e_{2}$ are jointly epic: any two $\varphi$, $\varphi'\colon{A \times_{B} C \to D}$ such that
\[
\varphi e_1 = \alpha = \varphi ' e_1\quad\text{and}\quad\varphi e_2 = \gamma = \varphi ' e_2
\]
must coincide. We use the notations from Diagram~\eqref{kite} and consider $a\in A$ and $c\in C$ with $f(a)=b=g(c)$. Then
\begin{align*}
w_{1}(\varphi(a,c),\alpha(a),\beta(b)) & = w_{1}(\varphi(a,c),\varphi(a,s(b)),\varphi(r(b),s(b))) \\
 & =\varphi(w_{1}(a,a,r(b)),w_{1}(c,s(b),s(b)))\\
 & =\varphi(w_{2}(a, r(b),r(b)),c)\\
 & =\varphi(w_{2}(a, r(b),r(b)),w_{2}(s(b),s(b),c))\\
 & =w_{2}(\varphi(a, s(b)),\varphi(r(b),s(b)),\varphi(r(b),c))\\
 & =w_{2}(\alpha(a),\beta(b),\gamma(c))
\end{align*}
and
\begin{align*}
w_{2}(\beta(b), \gamma(c), \varphi(a,c)) & =w_{2}(\varphi(r(b),s(b)),\varphi(r(b),c),\varphi(a,c)) \\
 & =\varphi(w_{2}(r(b),r(b),a),w_{2}(s(b),c,c))\\
 & =\varphi(a,w_{1}(s(b),s(b),c)))\\
 & =\varphi(w_{1}(a,r(b),r(b)),w_{1}(s(b),s(b),c))\\
 & =w_{1}(\varphi(a, s(b)),\varphi(r(b),s(b)),\varphi(r(b),c))\\
 & =w_{1}(\alpha(a),\beta(b),\gamma(c)),
\end{align*}
which proves that
\[
w_{1}(\varphi(a,c),\alpha(a),\beta(b))=w_{2}(\alpha(a),\beta(b),\gamma(c))=w_{1}(\varphi'(a,c),\alpha(a),\beta(b))
\]
and
\[
w_{2}(\beta(b),\gamma(c),\varphi(a,c))=w_{1}(\alpha(a),\beta(b),\gamma(c))=w_{2}(\beta(b),\gamma(c),\varphi'(a,c)),
\]
since both expressions only depend on $\alpha(a)$, $\beta(b)$ and $\gamma(c)$. Hence by definition of~$\W$ we have that $\varphi(a,c)=\varphi'(a,c)$ for all $(a,c)\in A \times_{B} C$.
\end{proof}

We could actually leave out the middle equalities (the ones not involving $x$ and~$x'$) in the quasi-identity and still obtain a weakly Mal'tsev quasivariety, but the result of this procedure would be to small to include the following example, so we are not sure that it wouldn't force the quasivariety to become Mal'tsev.

\begin{example}\label{All-but-Maltsev}
The example due to Mitschke~\cite{Mitschke} of a category which is Goursat but not Mal'tsev may be modified using Proposition~\ref{Proposition-Subvariety} to yield an example of a category which is Goursat and weakly Mal'tsev but not Mal'tsev. In fact, Proposition~\ref{Proposition-Subvariety} makes it possible to construct such examples ad libitum.

Let the variety $\V$ consist of \defn{implication algebras}, i.e., $(I,\cdot)$ which satisfy
\[
\left\{\begin{aligned}
&(x y) x=x\\
&(x y) y=(y x) x \\
&x (y z)=y (x z)
\end{aligned}\right.
\]
where we write $x\cdot y=xy$. It is proved in~\cite{Hagemann-Mitschke, Mitschke} that $\V$ is Goursat, and this is easily checked using Proposition~\ref{Proposition-n-Permutable} as witnessed by the terms $w_{1}(x,y,z)=(zy)x$ and $w_{2}(x,y,z)=(xy)z$. The further quasi-identity
\[\label{quasiid}
\left.\begin{aligned}
&(ba)x=(ab)c=(ba)x'\\
&(bc)x=(cb)a=(bc)x'\end{aligned} \right\}
\quad\Rightarrow\quad x=x'
\]
determines a weakly Mal'tsev sub-quasivariety $\W$ of $\V$ by Proposition~\ref{Proposition-Subvariety}. This quasivariety certainly stays Goursat, and the counterexample given in the paper~\cite{Mitschke} still works to prove that $\W$ is not Mal'tsev.

\begin{table}
\begin{center}
\begin{tabular}{c|cccccccc}
$a$ & 1&2&1&2&1&2&1&2\\
$b$ & 1&1&2&2&1&1&2&2\\
$c$ & 1&1&1&1&2&2&2&2\\
\hline
$x$ & 1&2&2&-&1&1&1&2
\end{tabular}
\end{center}
\caption{$x$ is uniquely determined by $a$, $b$ and $c$ in $A$}\label{Table-A}
\end{table}
\begin{table}
\begin{center}
\resizebox{\textwidth}{!}
{\begin{tabular}{c|ccccccccccccccccccccccccccc}
$a$ & 1&2&3&1&2&3&1&2&3&1&2&3&1&2&3&1&2&3&1&2&3&1&2&3&1&2&3\\
$b$ & 1&1&1&2&2&2&3&3&3&1&1&1&2&2&2&3&3&3&1&1&1&2&2&2&3&3&3\\
$c$ & 1&1&1&1&1&1&1&1&1&2&2&2&2&2&2&2&2&2&3&3&3&3&3&3&3&3&3\\
\hline
$x$ & 1&2&3&2&-&-&3&-&-&1&1&3&1&2&3&3&3&-&1&2&1&2&-&2&1&2&3
\end{tabular}}
\end{center}
\caption{$x$ is uniquely determined by $a$, $b$ and $c$ in $B$}\label{Table-B}
\end{table}
Indeed, the implication algebras $A=\{1,2\}$ and $B=\{1,2,3\}$ with respective multiplication tables
\[
\left(\begin{matrix} 1&2\\ 1&1 \end{matrix}\right)
\qquad\text{and}\qquad
\left(\begin{matrix} 1&2&3\\ 1&1&3\\ 1&2&1 \end{matrix}\right)
\]
also belong to the quasivariety $\W$: given any choice of $a$, $b$ and $c$, the system of equations
\[
\left\{\begin{aligned}
&(ba)x=(ab)c\\
&(bc)x=(cb)a\end{aligned} \right.
\]
either has no solution or just one, as pictured in Table~\ref{Table-A} for the algebra~$A$ and in Table~\ref{Table-B} for $B$.

To see that the quasivariety $\W$ is not Mal'tsev, it now suffices to consider the homomorphisms $f$, $g\colon{B \to A}$ defined respectively by
\[
f(1)=f(2)=1,\quad f(3)=2
\]
and
\[
g(1)=g(3)=1,\quad g(2)=2.
\]
It is easy to check that the respective kernel relations $R$ and $S$ of $f$ and $g$ do not commute: $RS$ contains the element $(3,2)$, but not $(2,3)$, which is in $SR$.
\end{example}

\section*{Acknowledgements}
We wish to thank the referee, Julia Goedecke, Zurab Janelidze and Diana Rodelo.

%\bibliography{tim}
%\bibliographystyle{amsplain}

\providecommand{\noopsort}[1]{}
\providecommand{\bysame}{\leavevmode\hbox to3em{\hrulefill}\thinspace}
\providecommand{\MR}{\relax\ifhmode\unskip\space\fi MR }
% \MRhref is called by the amsart/book/proc definition of \MR.
\providecommand{\MRhref}[2]{%
  \href{http://www.ams.org/mathscinet-getitem?mr=#1}{#2}
}
\providecommand{\href}[2]{#2}

\small\noindent Nelson Martins Ferreira\\
Departamento de Matem\'atica, Escola Superior de Tecnologia e Gest\~ao\\
Centro para o Desenvolvimento R\'apido e Sustentado do Produto\\
Instituto Polit\'ecnico de Leiria, 2411--901 Leiria, Portugal\\
\emph{martins.ferreira@ipleiria.pt}

\vspace{2.5mm}

\noindent Tim Van~der Linden\\
CMUC, Universidade de Coimbra, 3001--454 Coimbra, Portugal\\
and\\
Institut de Recherche en Math\'ematique et Physique\\
Universit\'e catholique de Louvain\\
chemin du cyclotron~2 bte L7.01.02, 1348 Louvain-la-Neuve, Belgium\\
\textit{tim.vanderlinden@uclouvain.be}

\end{document}